\theoremstyle{plain}
\newtheorem{theorem}{Theorem}[section]
\newtheorem{proposition}{Proposition}[section]
\newtheorem{corollary}{Corollary}[section]
\newtheorem{definition}{Definition}[section]
\newtheorem{conjecture}[theorem]{Conjecture}
\theoremstyle{definition}
\newtheorem{example}{Example}[section]
\newtheorem{remark}{Remark}[section]
\newtheorem{problem}{Problem}[section]
\newcommand{\keywords}{\textbf{Key words. }\medskip}
\newcommand{\subjclass}{\textbf{MSC 2020. }\medskip}
\renewcommand{\abstract}{\textbf{Abstract. }\medskip}
\numberwithin{equation}{section}
\begin{document}

\title{Ultrametric spaces generated by labeled star graphs}

\author{Oleksiy Dovgoshey, Olga Rovenska}



\date{}

\maketitle

\begin{abstract}
 For arbitrary star graph $S$ with a non-degenerate vertex labeling $l\colon V(S) \to \mathbb{R}^+$ we denote by $d_l$ the corresponding ultrametric  on the vertex set $V(S)$ of $S$.
We characterize the class $\bf US$ of all ultrametric spaces $(V(S), d_l)$ up to isometry. We also find the necessary and sufficient conditions under which
the group of all self-isometries of ultrametric space $(V(S), d_l)$ coincides with the group of all self-isomorphisms of the labeled star graph $S(l)$.
\end{abstract}

\subjclass{Primary 54E35, Secondary 54E4}

\keywords{Labeled tree, graph isomorphism, isometry of metric spaces, star graph, ultrametric space}

\section{Introduction}

\hspace{4mm}
The problem of describing of finite ultrametric spaces using graph theory was posed in \cite{Lemin2001} by I.\,M.\,Gelfand and solved in \cite{Gurvich-Vyalyi2012} by V.\,Gurvich and M.\,Vyalyi with the help of monotone trees. A geometric interpretation of Gurvich-Vyalyi  representing trees was found in \cite{Petrov-Dovgoshey2014}. This allows us to use the Gurvich-Vyalyi representation in various extremal problems connected with finite ultrametric spaces \cite{Dovgoshey-Petrov2020,Dovgoshey-Petrov-Teichert2015,Dovgoshey-Petrov-Teichert2017}.

The Gurvich-Vyalyi trees form a special class of finite trees endowed with vertex labelings. The combinatorial and topological properties of infinite trees endowed with positive edge labelings have been studied in \cite{BD2006CPC, BDS2005JGT, BS2010CPC, DIESTEL2006846, DIESTEL20111423, Deistel2017, DK2004EJC, DP2017JCT, DS2011AM, DS2011TIA, DS2012DM, DMV2013AC, DP2013SM}.

The ultrametric spaces generated by so-called non-generate vertex labeling on arbitrary trees were first introduced in \cite{Dov2020TaAoG} and studied in \cite{Dovgoshey-Küçükaslan2022,Dovgoshey-Kostikov-2023}. The simplest types of infinite trees are rays and star graphs. The totally bounded ultrametric spaces generated by labeled  almost rays have recently been characterized in \cite{Dovgoshey-Vito}.

Let us denote by $\mathbb{R}^+$ the set $[0, \infty).$

An {\it ultrametric} on a nonempty set $X$ is a symmetric function
$
d \colon X \times X \to \mathbb{R}^+
$
such that we have $d(x, y) = 0$ if and only if $x = y$  and, moreover, the {\it strong triangle inequality}
\begin{equation}
\label{d(x,y)}
    d(x, y) \leq \max\{d(x, z),\,d(z,y)\}
\end{equation}
      holds for all $x, y, z \in X$.

A sequence $(x_n)_{n \in \mathbb{N}}$ of points in an ultrametric space $(X, d)$ is said to {\it converge} to a point $p \in X$,
\begin{equation}
    \label{sa}
\lim_{n \to \infty} x_n = p,
\end{equation}
if  $
\lim\limits_{n \to \infty} d(x_n, p) = 0 $
holds. If \eqref{sa} holds, then we say that $p$ is the {\it limit} of $(x_n)_{n \in \mathbb{N}}$ in $(X, d)$.

\begin{definition}
\label{am}
Let $(X, d)$ and $(Y, \rho)$ be ultrametric spaces. A bijective mapping
$
F\colon X \to Y
$
is said to be an  {\it isometry} if the equality
\begin{equation*}
d(x, y)=\rho\left(F(x), F(y)\right)
\end{equation*}
holds for all $x, y \in X$.
\end{definition}

The ultrametric spaces are said to be {\it isometric} if there is an isometry of these spaces.

Let us recall now some concepts from the graph theory.

The \textit{simple graph} is a pair $(V, E)$ consisting of a nonempty set $V$ and a set $E$ whose elements are unordered pairs $\{u, v\}$ of different points $u, v \in V$. For a graph $G = (V, E)$, the sets $V = V(G)$ and $E = E(G)$ are called the \textit{set of vertices} and the \textit{set of edges}, respectively. A graph $G$ is \textit{finite} if $V(G)$ is a finite set. A graph $H$ is, by definition, a \textit{subgraph} of a graph $G$ if the inclusions $V(H) \subseteq V(G)$ and $E(H) \subseteq E(G)$ are valid. In this case, we simply write $H \subseteq G$.

In what follows we will use the standard definitions of paths and cycles, see, for example, \cite[Section 1.3]{Diestel2017}. A graph $G$ is {\it connected} if for every two distinct $u, v \in V(G)$ there is a path $P$ joining $u$ and $v$ in $G,$
\begin{equation*}
    u,\, v\in V(P) \,\, \text{and}\,\, P\subseteq G.
\end{equation*}

A connected graph without cycles is called a {\it tree}.

\begin{definition}
\label{wer}
A tree $T$ is a {\it star graph} if there is a vertex $c \in V(T)$, the {\it center} of $T$, such that $c$ and $v$ are adjacent for every $v \in V(T) \setminus \{c\}$ but for all $u, w \in T$ we have $\{u, w\} \notin E(T)$ whenever
    \begin{equation*}
    u\neq c\neq w.
    \end{equation*}
\end{definition}

\begin{definition}
\label{ty}
A {\it labeled tree} $T(l)$ is a pair $(T,l)$, where $T$ is a tree and $l$ is a mapping defined on the vertex set $V(T)$.
\end{definition}

  The two labeled trees $T_1(l_1)$ and $T_2(l_2)$ are said to be equal iff
  \begin{equation}
      T_1 = T_2\,\, {\rm and}\,\, l_1 = l_2.
  \end{equation}

In what follows, we consider only the nonnegative real-valued labelings $l\colon V(T) \to \mathbb{R}^+$.

Let $T=T(l)$ be a labeled tree. Following~\cite{Dov2020TaAoG}, we define a mapping $d_l \colon V(T) \times
V(T) \to \mathbb{R}^{+}$ as
\begin{equation}  \label{e1.1}
d_l(u, v) =
\begin{cases}
0, & \text{if } u = v, \\
\max\limits_{w \in V(P)} l(w), & \text{otherwise},%
\end{cases}%
\end{equation}
where $P$ is the path joining $u$ and $v$ in $T$.

The following result is a direct corollary of Proposition\,3.2 from \cite{Dov2020TaAoG}.

\begin{theorem}\label{t1.4}
Let $T = T(l)$ be a labeled tree. Then the
function $d_l$ is an ultrametric on $V(T)$ if and only if the inequality
\begin{equation}\label{t1.4_eq1}
\max\{l(u), l(v)\} > 0
\end{equation}
holds for every edge $\{u, v\}$ of $T$.
\end{theorem}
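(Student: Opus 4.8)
The plan is to verify the three defining properties of an ultrametric separately and to isolate the precise point where the labeling hypothesis \eqref{t1.4_eq1} is used: symmetry and the strong triangle inequality will hold for \emph{every} labeling $l$, and only the separation axiom $d_l(u,v)=0 \iff u=v$ will turn out to be equivalent to \eqref{t1.4_eq1}. Symmetry is immediate from \eqref{e1.1}, since in a tree there is a unique path $P$ joining $u$ and $v$, and it coincides with the path joining $v$ and $u$, so $d_l(u,v)=d_l(v,u)$.

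For the separation axiom I would argue the equivalence with \eqref{t1.4_eq1} directly. If $d_l(u,v)>0$ for all distinct $u,v$, then applying this to the two endpoints of an arbitrary edge $\{u,v\}$, whose joining path is the edge itself with vertex set $\{u,v\}$, gives $\max\{l(u),l(v)\}=d_l(u,v)>0$, which is \eqref{t1.4_eq1}. Conversely, if \eqref{t1.4_eq1} holds and $u \neq v$, then the path $P$ joining them contains at least one edge $\{a,b\}$, whence $\max_{w \in V(P)} l(w) \geq \max\{l(a),l(b)\}>0$ and therefore $d_l(u,v)>0$; together with $d_l(u,u)=0$ this yields the separation axiom.

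The main work is the strong triangle inequality \eqref{d(x,y)}, which I claim holds for arbitrary $l$. After dispatching the degenerate cases (if $x=y$ the left-hand side is $0$; if $x=z$ or $z=y$ one gets equality), I would reduce to pairwise distinct $x,y,z$ and prove the key combinatorial fact that in a tree
\[
V(P_{xy}) \subseteq V(P_{xz}) \cup V(P_{zy}),
\]
where $P_{ab}$ denotes the unique $a$--$b$ path. Granting this inclusion, taking the maximum of $l$ over the vertex sets and applying \eqref{e1.1} immediately gives $d_l(x,y) \leq \max\{d_l(x,z),d_l(z,y)\}$.

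The crux, and the step I expect to demand the most care, is this vertex inclusion. I would prove it by a separation argument: fix $w \in V(P_{xy})$; if $w \in \{x,y\}$ the inclusion is trivial, so assume $w$ is interior to $P_{xy}$. Since $T$ is a tree, the $x$--$y$ path is unique and passes through $w$, so deleting $w$ disconnects $T$ into components placing $x$ and $y$ in distinct ones. The vertex $z$ lies in one of these components; if it shares the component of $x$, the $z$--$y$ path must traverse $w$, giving $w \in V(P_{zy})$, and symmetrically $w \in V(P_{xz})$ otherwise. This proves the lemma and completes the verification. The same conclusion follows from Proposition~3.2 of \cite{Dov2020TaAoG}, of which the theorem is a direct corollary, but the self-contained argument above makes transparent exactly where the hypothesis \eqref{t1.4_eq1} is needed, namely solely for the separation axiom.
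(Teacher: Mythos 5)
Your proof is correct, but it takes a genuinely different route from the paper: the paper offers no argument at all for Theorem \ref{t1.4}, simply declaring it a direct corollary of Proposition~3.2 in \cite{Dov2020TaAoG}. Your argument is self-contained and well structured: you verify symmetry, the separation axiom, and the strong triangle inequality \eqref{d(x,y)} separately, and you isolate the fact that \eqref{t1.4_eq1} is needed \emph{only} for separation, while symmetry and the strong triangle inequality hold for an arbitrary labeling $l$. The key combinatorial lemma, $V(P_{xy}) \subseteq V(P_{xz}) \cup V(P_{zy})$ for the unique paths in a tree, is exactly the right fact, and your cut-vertex argument for it is sound; combined with taking maxima of $l$ in \eqref{e1.1}, it immediately yields the strong triangle inequality. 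What your approach buys is transparency (the reader sees precisely where non-degeneracy enters) and independence from the external reference; what the paper's route buys is brevity, deferring to a source where the statement is established in greater generality. One small point to patch: in the separation step of your lemma you assert that $z$ lies in one of the components of $T - w$, which silently excludes the possibility $z = w$ (pairwise distinctness of $x, y, z$ does not prevent $z$ from coinciding with the interior vertex $w$ of $P_{xy}$). The conclusion still holds in that case — if $z = w$ then $w$ is an endpoint of $P_{xz}$, hence $w \in V(P_{xz})$ — but the case should be stated explicitly.
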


We will say that
$l $ is {\it non-degenerate} if \eqref{t1.4_eq1}
is valid for all $\{u,v\}\in E(T)$.

Let us introduce a class $\mathbf{US}$ of ultrametric spaces by the rule:
An ultrametric space $(X, d)$ belongs to $\mathbf{US}$ if and only if there is a labeled star graph $S(l)$ satisfying
\begin{equation}
    \label{mg}
    X=V(S) \,\, {\rm and} \,\, d=d_l,
\end{equation}
 where $d_l$ is defined as in \eqref{e1.1} with $T=S$.
If \eqref{mg} holds, then we say that $(X, d)$ is an {\bf US}-space generated by $S(l).$

We will also use the concept of isomorphic labeled trees.
Before introducing into consideration this concept, it is useful to remind the definition of isomorphism for free trees.

\begin{definition}
\label{mark}
Let $T_1$ and $T_2$ be trees. A bijection $f\colon V(T_1) \to V(T_2)$ is an {\it isomorphism} of $T_1$ and $T_2$ if
\begin{equation*}
(\{u, v\} \in E(T_1)) \Leftrightarrow (\{f(u), f(v)\} \in E(T_2))
\end{equation*}
is valid for all $u,$ $ v \in V(T_1)$. Two trees are {\it isomorphic} if there exists an isomorphism of these trees.
\end{definition}

For the case of labeled trees, Definition \ref{mark} must be modified as follows.

\begin{definition}
\label{mark1}
Let $T_1(l_1)$ and $T_2(l_2)$ be labeled trees. A mapping $f\colon V(T_1) \to V(T_2)$ is an isomorphism of $T_1(l_1)$ and $T_2(l_2)$ if it is an isomorphism of the free trees $T_1$ and $T_2$ and the equality
\begin{equation*}
l_2(f(v)) = l_1(v)
\end{equation*}
holds for every $v \in V(T_1)$.
\end{definition}

The main results are proven in Sections 2 and 3. In particular, Theorem~\ref{xz} gives us a metric characterization $\bf US$-spaces. The $\bf US$-spaces which admit a unique labeled star graph generating them are described up to isometry in Theorem~\ref{aaa}.

Theorem \ref{sp} shows that every self-isometry of an ultrametric space $(X,d)$ generated by labeled tree  $T_1(l_1)$
is an isomorphism $T_1(l_1)$ and a labeled tree $T_2(l_2)$
which also generate $(X, d)$.

In Theorem \ref{iuyr} we prove the necessary and sufficient conditions under which the group of self-isometries of $\bf US$-space coincides with
the group of self-isomorphism of every labeled star graph generating this space.

The final Section 4 contains a Conjecture \ref{sepjtg} that describes, up to weak similarity, the finite $\bf US$-spaces by some four-point condition. Using Example~\ref{£5^[77]} we show that this condition can be fulfilled for some infinite ultrametric space $(X,d) \notin {\bf US}$. In Conjecture \ref{tghjnk} we claim that each infinite, compact $\bf US$-space is the completion of an ultrametric space generated by labeled ray.

\section{Characterization of {\bf US}-spaces and unique-\\ness of generating star graphs}

\hspace{4mm}
The following theorem gives us a purely metric characterization of {\bf US}-spaces.

\begin{theorem}
    \label{xz}
Let $(X, d)$ be an ultrametric space. Then the following statements are equivalent:

{\rm (i)} $(X, d) \in {\bf US}.$

{\rm (ii)} There is $x_0 \in X$ such that the inequality
\begin{equation}
\label{zg}
    d(x_0, x) \leq d(y,x)
\end{equation}
holds whenever
\begin{equation}
\label{cd}
    x_0 \neq x \neq y.
\end{equation}

\end{theorem}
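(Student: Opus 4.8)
The plan is to identify the distinguished point $x_0$ of statement (ii) with the center $c$ of a generating star graph, and to recover a suitable labeling directly from the distances to $x_0$. Throughout I would use the explicit form of $d_l$ on a star: if $c$ is the center, then for a leaf $x$ the joining path is $x,c$, so $d_l(c,x) = \max\{l(c),l(x)\}$, while for two distinct leaves $x,y$ the joining path is $x,c,y$, so $d_l(x,y) = \max\{l(x),l(c),l(y)\}$. These two formulas are the only structural facts needed from the star shape.

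For the implication (i) $\Rightarrow$ (ii), I would fix a labeled star graph $S(l)$ generating $(X,d)$ and set $x_0 := c$. If the hypothesis \eqref{cd} holds, then $x$ is necessarily a leaf. If $y = x_0$, then \eqref{zg} is an equality; and if $y$ is another leaf, the path formulas give $d(y,x) = \max\{l(y),l(c),l(x)\} \geq \max\{l(c),l(x)\} = d(x_0,x)$. This establishes (i) $\Rightarrow$ (ii) with essentially no effort.

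The substance lies in (ii) $\Rightarrow$ (i), and its crux is to upgrade the one-sided inequality \eqref{zg} to the symmetric identity $d(x,y) = \max\{d(x_0,x),\,d(x_0,y)\}$ for all $x,y$ distinct from each other and from $x_0$. Applying \eqref{zg} with $x$ in the role of the middle point gives $d(x_0,x) \leq d(x,y)$, and applying it again with $y$ in that role (legitimate since $x_0 \neq y \neq x$) gives $d(x_0,y) \leq d(x,y)$; hence $\max\{d(x_0,x),d(x_0,y)\} \leq d(x,y)$. The strong triangle inequality \eqref{d(x,y)} supplies the reverse bound $d(x,y) \leq \max\{d(x_0,x),d(x_0,y)\}$, so equality holds. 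I anticipate that this passage from the asymmetric hypothesis to the maximum identity is the only genuine obstacle; everything else is bookkeeping.

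With the identity in hand, I would construct $S$ as the star graph with center $c := x_0$ and leaf set $X \setminus \{x_0\}$, and define $l(c) := 0$ and $l(v) := d(x_0,v)$ for every leaf $v$. Non-degeneracy is immediate, since for each edge $\{c,v\}$ we have $\max\{l(c),l(v)\} = d(x_0,v) > 0$. To finish, I would verify $d_l = d$: for a leaf $x$, $d_l(c,x) = \max\{0,d(x_0,x)\} = d(x_0,x)$, and for distinct leaves $x,y$, $d_l(x,y) = \max\{d(x_0,x),0,d(x_0,y)\}$, which equals $d(x,y)$ by the identity just proved. Hence $(X,d) \in \mathbf{US}$. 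I would also note the degenerate case $X = \{x_0\}$, where $S$ is a single vertex and both statements hold vacuously.
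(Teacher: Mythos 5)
Your proposal is correct and follows essentially the same route as the paper: both directions use the identical construction (center $c := x_0$, labeling $l(x_0)=0$ and $l(v)=d(x_0,v)$ for $v \neq x_0$), with the strong triangle inequality giving $d(x,y) \leq \max\{d(x_0,x),d(x_0,y)\}$ and the defining property \eqref{zg} giving the reverse bound. The only difference is cosmetic — you prove the identity $d(x,y)=\max\{d(x_0,x),d(x_0,y)\}$ directly before verifying $d_l=d$, whereas the paper verifies $d = d_l$ and rules out strict inequality by contradiction.
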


\begin{proof}
 (i) $\Rightarrow$ (ii). Let (i) hold. Then there is a labeled star graph $S(l)$ satisfying \eqref{mg}. Let us denote by $c$ a center of the star graph $S$. To prove statement (ii) it suffices to show that the inequality
\begin{equation}
\label{se}
    d_l(c, x) \leq d_l(y, x)
\end{equation}
 holds
 for all $x, y \in V(S)$ satisfying
 \begin{equation}
 \label{kj}
     c \neq x \neq y.
 \end{equation}

Let us consider arbitrary $x, y \in V(S)$ for which \eqref{kj} is valid. Then using Definition \ref{wer}, condition \eqref{kj} and formula \eqref{e1.1} we see that \eqref{se} holds iff
\begin{equation*}
    \max\{l(c), l(x)\} \leq \max\{l(y), l(c), l(x)\}.
\end{equation*}
The last inequality is trivially valid. Inequality \eqref{se} follows.

(ii) $\Rightarrow$ (i). Let statement (ii) hold. Let us denote by $S$ the star graph with $V(S) = X$ and the center
$c = x_0$, where $x_0$ is a point of $X$ satisfying  \eqref{zg} whenever \eqref{cd} holds. Now we define a labeling $l\colon V(S) \to \mathbb{R}^+$ as
\begin{equation}
    \label{oi}
l(x) =
\begin{cases}
0, & \text{if } x = x_0, \\
d(x_0, x), & \text{if } x \neq x_0.
\end{cases}
\end{equation}

Using Definition \ref{wer} and the inequality $d(x_0, x) > 0$, which is valid for each $x \neq x_0$ by definition of ultrametric spaces, we see that the labeling $l$ defined by \eqref{oi} is non-degenerate. Consequently the function $d_l$ defined by \eqref{e1.1} with $T = S$ is an ultrametric.  The ultrametric space $(X,d_l)$ belongs to {\bf US} by definition.
Thus to complete the proof it remains to show that
the equality
\begin{equation}
\label{2.6}
\quad d(x, y) = d_l(x, y)
\end{equation}
holds for all different $x, y \in X$.

First of all we note \eqref{oi} and \eqref{e1.1} imply \eqref{2.6} if $x = x_0$ or $y = y_0$.
Let
\begin{equation}
\label{pl}
    x \neq x_0 \neq y\,\, {\rm and} \,\,x \neq y
\end{equation}
hold.
Then the strong triangle inequality (see \eqref{d(x,y)}) and the equalities
\begin{equation}
\label{fra}
    d(x_0, x) = d_l(x_0, x), \quad  d(x_0, y) = d_l(x_0, y)
\end{equation}
imply
the inequality
\begin{equation*}
d(x, y) \leq \max\{d_l(x_0, x), d_l(x_0, y)\}.
\end{equation*}
It follows from  \eqref{e1.1}, \eqref{oi}, \eqref{pl} and \eqref{fra}   that
\begin{equation}
\label{28}
 \quad \max\{d_l(x_0, x), d_l(x_0, y)\} = d_l(x, y).
\end{equation}
Hence the inequality
\begin{equation}
\label{za}
d(x, y) \leq d_l(x, y)
\end{equation}
holds.
If \eqref{2.6} is not valid, then
it follows from \eqref{za} that
\begin{equation}
\label{fh}
    d(x,y) < d_l(x,y).
\end{equation}
Using \eqref{fra}, \eqref{28} and \eqref{fh}
we obtain
\begin{equation*}
d(x,y) < d_l(x_0,x) = d(x_0,x)
\end{equation*}
or
\begin{equation*}
d(x,y) < d_l(x_0,y) = d(x_0,y),
\end{equation*}
contrary to the definition of the point $x_0$.

The proof is completed.

\end{proof}

\begin{corollary}\label{zem}
 Let $(X, d)$ be an ultrametric space.
If the inequality
\begin{equation}
\label{9g}
    {\rm card}(X) \leq 3
\end{equation}
holds, then $(X, d)$ is an {\bf US}-space.
\end{corollary}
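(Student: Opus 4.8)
The plan is to verify condition~(ii) of Theorem~\ref{xz}; once a suitable point $x_0$ is exhibited, the implication (ii)~$\Rightarrow$~(i) of that theorem gives $(X,d)\in\mathbf{US}$ at once. I would organize the argument by the cardinality of $X$.

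First I would dispose of the cases ${\rm card}(X)\le 2$. Here any choice of $x_0\in X$ works: condition~\eqref{cd} demands $x_0\ne x$ and $x\ne y$, but with at most two points the only admissible triple has $y=x_0$, and then \eqref{zg} reads $d(x_0,x)\le d(x_0,x)$, which is trivially true (for ${\rm card}(X)=1$ there is no $x\ne x_0$ at all, so the requirement is vacuous).

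The real content is the case ${\rm card}(X)=3$, say $X=\{a,b,c\}$, and the key tool is the isosceles property of ultrametrics: among $d(a,b)$, $d(b,c)$, $d(a,c)$ the two largest values coincide. This follows directly from the strong triangle inequality~\eqref{d(x,y)}, since if two of the three distances differed, applying \eqref{d(x,y)} to the larger of them would force it to equal the third. I would then take $x_0$ to be an endpoint of a shortest edge: assuming $d(a,b)$ is minimal among the three, set $x_0=a$. Spelling out condition~(ii) for $x_0=a$, the subcases with $y=x_0$ are trivial, so only two inequalities remain, namely $d(a,b)\le d(b,c)$ (obtained with $x=b$, $y=c$) and $d(a,c)\le d(b,c)$ (obtained with $x=c$, $y=b$). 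The first holds because $d(a,b)$ is the minimal distance, and the second holds because $d(a,c)$ and $d(b,c)$ are the two largest distances and hence equal by the isosceles property.

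Having produced an $x_0$ meeting condition~(ii) in every case, Theorem~\ref{xz} yields $(X,d)\in\mathbf{US}$. The only genuine obstacle is the three-point case, and there the essential point is choosing the center to be an endpoint of a shortest edge rather than the opposite vertex; after that choice the isosceles property makes the verification immediate.
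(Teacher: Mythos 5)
Your proof is correct and follows essentially the same route as the paper: the paper's (much terser) argument also invokes the isosceles property of ultrametric triangles and then applies Theorem~\ref{xz}, with its Figure~1 implicitly making the same choice of center as yours, namely an endpoint of a shortest side. Your write-up simply makes explicit the case analysis and the verification of condition~(ii) that the paper leaves to the reader.
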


\begin{proof}
 To see this, notice that all ultrametric triangles are isosceles triangles with a base no larger than the "legs".
Thus, \eqref{9g} implies $(X, d) \in {\bf US}$ by Theorem \ref{xz} (see Figure 1).

\end{proof}

\begin{figure}[h!]\label{f1}
\centering

\begin{tikzpicture}
  \node[draw=none] (label1) at (1.3,2.2) {$(X,d)$};
    \node[draw, circle, fill=black, inner sep=1pt, label=above:{}] (A) at (0,2) {};
    \node[draw, circle, fill=black, inner sep=1pt, label=below:{}] (B) at (-1,0) {};
    \node[draw, circle, fill=black, inner sep=1pt, label=below:{}] (C) at (1,0) {};

    \draw (A) -- (B) node[midway, left] {$2$};
    \draw (A) -- (C) node[midway, right] {$2$};
    \draw (B) -- (C) node[midway, below] {$1$};
\end{tikzpicture}
\quad
\begin{tikzpicture}
  \node[draw=none] (label1) at (1.5,2) {$T(l)$};
    \node[draw, circle, fill=black, inner sep=1pt, label=above:{\textcircled{2}}] (O) at (0,2) {};
    \node[draw, circle, fill=black, inner sep=1pt, label=below:{\textcircled{0}}] (P1) at (-1,0) {};
    \node[draw, circle, fill=black, inner sep=1pt, label=below:{\textcircled{1}}] (P3) at (1,0) {};

    \draw (O) -- (P1);
    \draw (P1) -- (P3);
\end{tikzpicture}

\caption{The labeled star graph $T(l)$ generates the ultrametric triangle $(X, d)$.}
\end{figure}
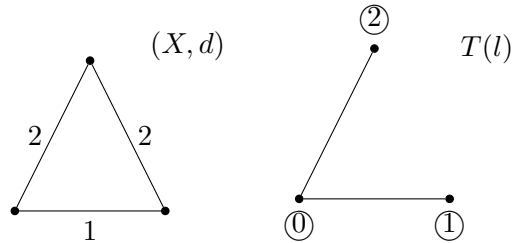

\begin{example}
    \label{fgjkdfb}
Let us define a mapping $d^+\colon \mathbb{R}^+ \times \mathbb{R}^+ \to \mathbb{R}^+$ as

\begin{equation}
\label{fdhinb}
d^+(p, q) =
\begin{cases}
0, & \text{if } p = q, \\
\max \{p, q\}, & \text{if } p \neq q.
\end{cases}
\end{equation}
Then $d^+$ is an ultrametric on $\mathbb{R}^+$ and, in addition, \eqref{fdhinb} gives the inequality
\begin{equation*}
d^+(0, p) \leq d(p, q)
\end{equation*}
whenever $0\neq p\neq q$.
Hence the ultrametric space $(\mathbb{R}^+, d^+)$ belongs  ${\bf US}$ by Theorem~\ref{xz}.

\end{example}

\begin{remark}
The ultrametric space $(\mathbb{R}^+, d^+)$ was introduced by Delhomme, Laflamme, Pouzet and Sauer in \cite{Delhommé}. The importance of this space and its subspaces was noted by Yoshito Ishiki in \cite{Ishiki}.
The endomorphisms of $(\mathbb{R}^+, d^+)$ are described in \cite{Dovgoshey-Kostikov-2024}.
\end{remark}

Let $(X, d)$ be an ultrametric space. Below we denote by $D(X)=D(X,d)$ the set of all distances between points of $(X, d)$,
\begin{equation}
\label{ew}
    D(X) := \{d(x, y) : x, y \in X\},
\end{equation}
    and write
\begin{equation}
\label{ew1}
D_0(X) = D_0(X,d) :=D(X,d)\setminus \{0\}.
\end{equation}

The next our result is a metric description of \textbf{US}-spaces which admit only one labeled star graph generated them.

\begin{theorem}
\label{aaa}
Let $(X, d)$ be an \textbf{US}-space. Then the following statements are equivalent:

   {\rm (i)} There exists a unique labeled star graph which generates $(X, d)$.

     {\rm (ii)}  If some labeled star graphs $ S_1(l_1) $ and $ S_2(l_2) $ generate $(X,d) $, then $S_1(l_1) $ are isomorphic to $S_2(l_2).$

    {\rm (iii)} The equality
    \begin{equation}
    \label{uy1}
    {\inf } D_0 (X) = 0
    \end{equation}
    holds.
\end{theorem}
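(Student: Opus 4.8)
The plan is to prove the cyclic chain of implications (i) $\Rightarrow$ (ii) $\Rightarrow$ (iii) $\Rightarrow$ (i). The first of these is immediate: if only one labeled star graph generates $(X,d)$, then any two generating graphs must coincide, and hence are isomorphic through the identity map. Throughout I record the two basic distance formulas valid for a generating star graph $S(l)$ with center $c$: by \eqref{e1.1} one has $d(c,x)=\max\{l(c),l(x)\}$ for every $x\neq c$, and $d(x,y)=\max\{l(c),l(x),l(y)\}$ for all distinct $x,y\in V(S)\setminus\{c\}$. I shall also use that, as established in the proof of the implication (i) $\Rightarrow$ (ii) of Theorem~\ref{xz}, the center $c$ of any generating star graph satisfies $d(c,x)\leq d(y,x)$ whenever $c\neq x\neq y$.

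For (iii) $\Rightarrow$ (i) I would argue in two steps. \emph{Step A.} If $\inf D_0(X)=0$, then every generating star graph has $l(c)=0$: indeed, $l(c)>0$ would force every nonzero distance to be at least $l(c)>0$ by the formulas above, contradicting \eqref{uy1}. Consequently $d(c,x)=\max\{0,l(x)\}=l(x)$, so the labeling is completely determined by the center via $l(x)=d(c,x)$. \emph{Step B (the main obstacle).} It remains to prove that the center itself is unique. Suppose, to the contrary, that $c_1\neq c_2$ are centers of (possibly different) generating star graphs, and set $a:=d(c_1,c_2)>0$. Applying the inequality of Theorem~\ref{xz} for $c_1$ with $x=c_2$ gives $a\leq d(y,c_2)$ for all $y\neq c_2$, and symmetrically $a\leq d(y,c_1)$ for all $y\neq c_1$; finally, for any $u,v$ outside $\{c_1,c_2\}$ the same inequality for $c_1$ yields $a\leq d(c_1,u)\leq d(u,v)$. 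Hence every nonzero distance is at least $a$, so $\inf D_0(X)=a>0$, contradicting \eqref{uy1}. Thus both the center and the labeling are unique, which is (i).

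For (ii) $\Rightarrow$ (iii) I would prove the contrapositive. Assume $\inf D_0(X)=r>0$ and fix any center $c$ of a generating star graph. Define $l_0$ by $l_0(c)=0$, $l_0(x)=d(c,x)$, and, for some $t\in(0,r]$, define $l_t$ by $l_t(c)=t$, $l_t(x)=d(c,x)$. Since $d(c,x)\geq r\geq t$ for all $x\neq c$, a direct check with the formulas above shows that both $S(l_0)$ and $S(l_t)$ generate $(X,d)$. These two labeled star graphs are not isomorphic: an isomorphism of labeled trees preserves the multiset of labels (by Definition~\ref{mark1}), yet $0$ occurs as a label of $l_0$, namely $l_0(c)$, but not of $l_t$, so their label multisets differ. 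This contradicts (ii).

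The only routine point to dispose of separately is the degenerate case $\mathrm{card}(X)=1$, where $D_0(X)=\varnothing$; with the convention $\inf\varnothing=+\infty$ all three statements fail, so the equivalence still holds. The step I expect to require the most care is Step~B of (iii) $\Rightarrow$ (i), where the interplay between two candidate centers, the condition of Theorem~\ref{xz}, and the strong triangle inequality must be exploited to manufacture the positive lower bound $a$ on all distances.
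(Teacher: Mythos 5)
Your proposal is correct, and the overall architecture (the cycle (i) $\Rightarrow$ (ii) $\Rightarrow$ (iii) $\Rightarrow$ (i), plus a separate treatment of $\mathrm{card}(X)=1$ where all three statements fail) matches the paper. Your (ii) $\Rightarrow$ (iii) is essentially the paper's own argument: two generating labelings differing only at the center, distinguished because an isomorphism of labeled trees preserves the range of the labeling (Definition~\ref{mark1}) and $0$ lies in the range of one labeling but not the other; the paper takes $t^*$ strictly below $\inf D_0(X)$, while your $t\in(0,r]$ also works since only $d(c,x)\geq t$ is needed. Where you genuinely diverge is (iii) $\Rightarrow$ (i). The paper argues analytically: it extracts sequences $(x_n)$, $(y_n)$ with $x_n\neq y_n$ and $d(x_n,y_n)\to 0$, uses the center inequality to get $d(c_i,x_n)\leq d(x_n,y_n)\to 0$, concludes $c_1=c_2$ from uniqueness of limits of a convergent sequence, and then derives $l_i(c_i)=0$ from the same sequences. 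Your argument is purely inequality-based and avoids convergence altogether: Step~A shows $l(c)=0$ because a positive $l(c)$ would be a lower bound for every nonzero distance, and Step~B shows that two distinct centers $c_1\neq c_2$ would make $a=d(c_1,c_2)>0$ a lower bound for every nonzero distance (checked case by case via the inequality \eqref{zg} of Theorem~\ref{xz}), contradicting \eqref{uy1} in both cases. This route is more elementary---no topological notions are required---and its contrapositive form makes explicit what quantity obstructs uniqueness when two centers coexist; the paper's sequence argument, on the other hand, ties in naturally with the convergence machinery introduced in Section~1 and reuses it to identify both the center and its label. Both proofs are complete and correct.
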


\begin{proof}
Let us consider first the case
\begin{equation}
    \label{uy2}
\text{card}(X) > 1.
\end{equation}

(i) $\Rightarrow$ (ii). This implication is trivially true.

(ii) $\Rightarrow$ (iii). Let (ii) be valid. We must prove equality \eqref{uy1}.
Suppose contrary that
\begin{equation}
    \label{uy3}
{\inf } D_0(X) \neq 0.
\end{equation}
Then the inclusion $D_0(X) \subseteq \mathbb{R}^+$ and \eqref{uy3} give us the inequality
\begin{equation}
    \label{uy4}
\inf D_0(X) > 0.
\end{equation}

Using \eqref{uy4} we can find a positive number $t^*$ such that
\begin{equation}
    \label{uy5}
0 < t^* < \inf D_0(X).
\end{equation}

    Let $c$ be a center of some labeled star graph which generates $(X,d)$.

Similarly \eqref{oi} we define labelings
$l^*\colon V(S) \to \mathbb{R}^+$ and $l^0\colon V(S) \to \mathbb{R}$
by
\begin{equation}
\label{uy6}
l^*(x) =
\begin{cases}
t^*, & \text{if } x = c, \\
d(c, x), & \text{if } x \neq c,
\end{cases}
\end{equation}
and
\begin{equation}
\label{uy7}
l^0(x) =
\begin{cases}
0, & \text{if } x = c, \\
d(c, x), & \text{if } x \neq c
\end{cases}
\end{equation}
for all $x \in X$. Now we introduce the ultrametrics $d_{l^*}$ and $d_{l^0}$ on the set $X$ by rule \eqref{e1.1} with $T = S$ and $l = l^*$ and, respectively, $l = l^0$.

It follows from \eqref{uy5}, \eqref{uy6} and \eqref{uy7} that the equality
\begin{equation}
\label{uy8}
d_{l^0} = d_{l^*}
\end{equation}
holds. Reasoning in the same way as in proof of \eqref{2.6}, we obtain the equality
\begin{equation}
\label{uy9}
d_{l^0}=d.
\end{equation}

(We only note that \eqref{zg} holds with $x_0 = c$ whenever $c \neq x \neq y$).

Therefore,  $d_{l^*} = d$ also holds by \eqref{uy8}.
Thus the labeled star graphs $S(l^*)$ and $S(l^0)$ generate $(X,d)$ simultaneously.

Statement (ii) implies that
$S(l^*)$ and $S(l^0)$ are isomorphic labeled star graphs.
Hence, the equality
\begin{equation}
\label{uy10}
l^0 (V(S)) = l^* (V(S))
\end{equation}
holds by Definition \ref{mark1}.
Using \eqref{uy5}, \eqref{uy6} and \eqref{uy7} it is easy to see that
\begin{equation*}
0 \in l^0(V(S))\,\, \text{and}\,\,
0 \notin l^{*}(V(S)),
\end{equation*}
that contradicts \eqref{uy10}.

(iii) $\Rightarrow$ (i). Let equality \eqref{uy1} hold.

Let us consider two arbitrary labeled star graphs $S_1(l_1)$ and $S_2(l_2)$ which generate $(X,d)$.
To prove statement (i) it suffices to show that
\begin{equation}
\label{ku}
S_1 = S_2 \,\, \text{and} \,\, l_1 = l_2.
\end{equation}

Let us denote by $c_i$ a center of the star graph $S_i(l_i)$, $i = 1,2$.

It follows from Definition \ref{wer} and \eqref{mg} that the equality
\begin{equation}
\label{qw}
c_1 = c_2
\end{equation}
implies
\begin{equation}
\label{ro}
S_1 = S_2.
\end{equation}

To prove equality \eqref{qw}, note that equality \eqref{uy1} implies the existence of sequences $(x_n)_{n \in \mathbb{N}}$ and $(y_n)_{n \in \mathbb{N}}$ of points of $X$ such that
\begin{equation}
\label{pu}
x_n \neq y_n
\end{equation}
for every  $n \in \mathbb{N}$ and
\begin{equation}
\label{zi}
\lim_{n \to \infty} d(x_n, y_n) = 0.
\end{equation}

Now using inequality \eqref{se} with $x = x_n$, $y = y_n$ and $c = c_i, $ $l = l_i$,
we obtain
\begin{equation*}
d(c_i, x_n) \leq d(x_n, y_n)
\end{equation*}
for all $n\in \mathbb{N}$ and $i\in\{1,2\}.$
The last inequality and \eqref{zi} imply
\begin{equation}
\label{ty2}
 \lim_{n \to \infty} d(c_1,x_n)=\lim_{n \to \infty}d(c_2,x_n)=0.
\end{equation}
Hence the sequence $(x_n)_{n\in \mathbb{N}}$ is convergent in $(X,d).$
Since each convergent sequence has exactly one limit, equality \eqref{qw} follows from \eqref{ty2}.

Let us now prove the equalities
\begin{equation}
\label{lk}
l_1(c_1) = 0 \,\, \text{and} \,\, l_2(c_2) = 0.
\end{equation}

If the first equality in \eqref{lk} is false, then
we have
\begin{equation*}
l_1(c_1) = t
\end{equation*}
for some $t > 0$. The last equality and \eqref{e1.1} give us the inequality
\begin{equation}
\label{at}
d_{l_1}(c_1, x_n) \geq t
\end{equation}
for each $n \in \mathbb{N}$.
Since $(X, d)$ is generated by $S_1(l_1)$, we have the equality $d = d_{l_1}$, that together with \eqref{at} implies
\begin{equation*}
d(c_1, x_n) \geq t > 0
\end{equation*}
for each $n \in \mathbb{N}$, contrary to \eqref{ty2}.

The first equality in \eqref{lk} follows. The second one can be proved similarly.

Let us consider now an arbitrary point $x \in X$. Then the equalities $d_{l_1} = d,$ $d_{l_2} = d,$
\eqref{e1.1}, \eqref{qw} and \eqref{lk} imply
\begin{equation*}
l_1(x) = d(x, c_1) = d(x, c_2) = l_2(x).
\end{equation*}

The equality $l_1 = l_2$ is proved. Thus the implication
(iii) $\Rightarrow$ (i)  is true if \eqref{uy2} holds.

Let us consider now the case
\begin{equation*}
\text{card}(X) \leq 1.
\end{equation*}

Then the equality
\begin{equation}
\label{rp}
\text{card}(X) = 1
\end{equation}
holds, because for every ultrametric space $(X, d)$ the set $X$ is nonempty by definition.
Equalities \eqref{ew}, \eqref{ew1} and \eqref{rp} imply
\begin{equation*}
D(X) = \{0\}, \,\, \text{and}\,\, D_0(X) = \{0\}\setminus\{0\}= \emptyset.
\end{equation*}

Considering $D_0(X)$ as an empty subset of $[0, +\infty]$ and using the definition of infinum, we obtain the equality
\begin{equation*}
\inf D_0(X) = +\infty.
\end{equation*}

Thus statement (iii) is false.

If $S(l)$ is a labeled star graph generated $(X,d)$, then we have
\begin{equation*}
    \text{card}(V(S))=1
\end{equation*}
by \eqref{mg} and \eqref{rp}.
Let us denote by $c$ the unique vertex of $S.$
Then, for every $t\in [0,\infty)$, the labeling $l\colon V(S)\to \mathbb{R}^+$ defined by
\begin{equation*}
    l(c)=t,
\end{equation*}
satisfies the equality $d_l=d.$
Hence there exist infinitely many non-isomorphic labeled star graphs which generate $(X,d)$.

Thus statements (i) and (ii)  are also false and, consequently, the logical equivalences
\begin{equation*}
    {\rm (i)} \Leftrightarrow {\rm (ii)} \,\, \text{and}\,\, {\rm (i)} \Leftrightarrow {\rm (iii)}
\end{equation*}
are true.

The proof is completed.

\end{proof}

\begin{example}
Let $(\mathbb R^+,d^+)$ be the ${\bf US}$-space defined in Example~\ref{fgjkdfb}. Then the equality
\begin{equation*}
\inf D_0 (\mathbb R^+,d^+)=0
\end{equation*}
holds. Hence, by Theorem \ref{aaa}, there is a unique labeled star graph generating $(\mathbb R^+,d^+).$
\end{example}

\section{From isometries to isomorphism  and back}

\hspace{4mm} The following proposition coincides with Lemma 3.11 from \cite{Dov2020TaAoG}.

\begin{proposition}
    \label{ljh}
If $f \colon V(T_1) \to V(T_2)$ is an isomorphism of labeled trees $T_1 = T_1(l_1)$ and $T_2 = T_2(l_2)$, then the equality
\begin{equation*}
d_{l_1}(u_1, v_1) = d_{l_2}(f(u_1), f(v_1))
\end{equation*}
holds for all $u_1, v_1 \in V(T_1)$.
\end{proposition}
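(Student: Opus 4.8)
The plan is to prove the statement directly from the definition of $d_l$ given in \eqref{e1.1}, exploiting the fact that a tree isomorphism carries paths to paths. The key observation is that because $f$ is an isomorphism of the free trees $T_1$ and $T_2$, it maps the unique $u_1$--$v_1$ path in $T_1$ bijectively onto the unique $f(u_1)$--$f(v_1)$ path in $T_2$, and because $f$ respects the labelings via $l_2(f(v)) = l_1(v)$, the maximum label along one path equals the maximum label along the other.

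First I would handle the trivial case $u_1 = v_1$. Since $f$ is a bijection, $u_1 = v_1$ if and only if $f(u_1) = f(v_1)$, so both sides of the claimed equality are $0$ by the first branch of \eqref{e1.1}, and we are done. For the main case I assume $u_1 \neq v_1$, so that $f(u_1) \neq f(v_1)$ as well, and both distances are computed by the ``otherwise'' branch as a maximum of labels along a path.

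The central step is to identify the two relevant paths. Let $P_1$ be the path joining $u_1$ and $v_1$ in $T_1$ with vertex sequence $u_1 = w_0, w_1, \dots, w_k = v_1$, where consecutive vertices are adjacent. Applying $f$ and using the defining biconditional of Definition~\ref{mark}, the image vertices $f(w_0), f(w_1), \dots, f(w_k)$ form a walk in $T_2$ in which consecutive vertices are adjacent; since $f$ is injective these vertices are distinct, so this walk is in fact a path. As $T_2$ is a tree, the path joining $f(u_1)$ and $f(v_1)$ is unique, hence $P_2$ is precisely this image path and $V(P_2) = f(V(P_1))$. I would then compute
\begin{equation*}
d_{l_2}(f(u_1), f(v_1)) = \max_{w \in V(P_2)} l_2(w) = \max_{w \in V(P_1)} l_2(f(w)) = \max_{w \in V(P_1)} l_1(w) = d_{l_1}(u_1, v_1),
\end{equation*}
where the second equality uses $V(P_2) = f(V(P_1))$ together with the bijectivity of $f$ restricted to $V(P_1)$, and the third equality uses the label-preserving property $l_2(f(w)) = l_1(w)$ from Definition~\ref{mark1}.

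The only genuine subtlety — the main obstacle, though a mild one — is justifying that the image under $f$ of the path $P_1$ really is \emph{the} path $P_2$ in $T_2$, i.e.\ that tree isomorphisms send geodesics to geodesics. This rests on the uniqueness of paths between two vertices in a tree, which is standard, combined with the fact that injectivity of $f$ prevents the image walk from repeating vertices. Once this identification of $V(P_2)$ with $f(V(P_1))$ is in place, the rest is a routine reindexing of the maximum, and no inequality or strong-triangle reasoning is needed.
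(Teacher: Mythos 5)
Your proof is correct. Note, however, that the paper itself does not prove Proposition~\ref{ljh} at all: it imports the statement verbatim as Lemma~3.11 of \cite{Dov2020TaAoG}, so there is no internal argument to compare against. Your proposal therefore supplies a self-contained proof where the paper relies on a citation, and it is the natural one: the case $u_1=v_1$ is dispatched by bijectivity, and in the case $u_1\neq v_1$ you correctly identify the one genuine point needing justification, namely that $f$ carries the unique $u_1$--$v_1$ path in $T_1$ onto the unique $f(u_1)$--$f(v_1)$ path in $T_2$. Your justification is sound: consecutive vertices of $P_1$ map to adjacent vertices of $T_2$ by the biconditional in Definition~\ref{mark}, injectivity of $f$ guarantees the image walk has no repeated vertices and hence is a path, and uniqueness of paths between two vertices of a tree forces this image path to be $P_2$, giving $V(P_2)=f(V(P_1))$. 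The chain of equalities then follows from reindexing the maximum in \eqref{e1.1} and the label-preservation condition $l_2(f(w))=l_1(w)$ of Definition~\ref{mark1}. Two further points in your favor: the maximum is taken over the finitely many vertices of a path, so it is well defined even when the trees are infinite, and your argument never invokes non-degeneracy of the labelings or the strong triangle inequality, which is as it should be, since the proposition is a statement about the functions $d_{l_1}$, $d_{l_2}$ as defined by \eqref{e1.1} rather than about ultrametrics.
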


The converse of Proposition \ref{ljh} is generally not true. Even if $T_1(l_1)$ and $T_2(l_2)$ are not isomorphic as free graphs, the ultrametric spaces $(V(T_1), d_{l_1})$ and $(V(T_2), d_{l_2})$ can be isometric. An example of such trees is shown in Figure 2 
below.

\begin{figure}[h!]\label{f2}
\centering
\begin{tikzpicture}[every node/.style={circle, draw, minimum size=0.6cm, font=\small}, node distance=1cm, scale=0.8, transform shape]
    \node[draw=none] (label1) at (1.5,1.2) {$T_1(l_1)$};
    \node (a1) at (0,0) {$a_1$};
    \node (a2) [right of=a1, node distance=1.3cm] {$a_2$};
    \node (a3) [right of=a2, node distance=1.3cm] {$a_3$};
    \node (a4) [right of=a3, node distance=1.3cm] {$a_4$};

    \draw (a1) -- (a2);
    \draw (a2) -- (a3);
    \draw (a3) -- (a4);

    \node[draw=none] (label2) at (9,1.2) {$T_2(l_2)$}; 
    \node (b1) at (7.5,0) {$a_1$};
    \node (b2) [above=0.4cm of b1] {$a_3$};
    \node (b3) [left=0.4cm of b1] {$a_4$};
    \node (b4) [right=0.4cm of b1] {$a_2$};

    \draw (b1) -- (b2);
    \draw (b1) -- (b3);
    \draw (b1) -- (b4);
\end{tikzpicture}
\caption{The labeled path $T_1(l_1)$ and the labeled star graph $T_2(l_2)$ generate isometric spaces $(V(T_1), d_{l_1})$ and $(V(T_2), d_{l_2})$ if $0 <a_1 \leq a_2 \leq a_3 \leq a_4$.}
\end{figure}
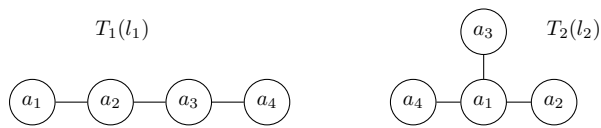

The next Proposition \ref{tre} shows that the isometry
of \textbf{US}-spaces implies that  the star graphs
generated  these spaces are isomorphic as free trees.

\begin{proposition}
\label{tre}
Let $(X_1, d_1)$ and $(X_2, d_2)$ be isometric \textbf{US}-spaces and let labeled
star-graphs $S_{1}(l_1)$ and $S_{2}(l_2)$ generate
$(X_1, d_1)$ and $(X_2, d_2)$ respectively.
Then the free star graphs $S_1$ and $S_2$ are isomorphic.
\end{proposition}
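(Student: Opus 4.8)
The plan is to reduce the statement to the elementary observation that the isomorphism type of a star graph, as a free tree, is completely determined by the cardinality of its vertex set, and then to note that the isometry merely supplies the required bijection between the underlying point sets. The labels $l_1, l_2$ will play no role whatsoever, since the claim concerns free trees only.

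First I would use the hypothesis that $(X_1,d_1)$ and $(X_2,d_2)$ are isometric to fix an isometry $F\colon X_1 \to X_2$. By Definition \ref{am}, $F$ is in particular a bijection, so $\mathrm{card}(X_1) = \mathrm{card}(X_2)$. Since $S_1(l_1)$ generates $(X_1,d_1)$ and $S_2(l_2)$ generates $(X_2,d_2)$, the equalities \eqref{mg} give $V(S_1) = X_1$ and $V(S_2) = X_2$, whence
\[
\mathrm{card}(V(S_1)) = \mathrm{card}(V(S_2)).
\]

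Next I would build an explicit isomorphism of the free trees. Let $c_1$ and $c_2$ denote the centers of $S_1$ and $S_2$, respectively. From the displayed cardinality equality I would deduce $\mathrm{card}(V(S_1)\setminus\{c_1\}) = \mathrm{card}(V(S_2)\setminus\{c_2\})$; this holds both when the vertex sets are finite (delete one point from each equal finite set) and when they are infinite (deleting a single point leaves an infinite cardinality unchanged). Hence there is a bijection $g\colon V(S_1)\setminus\{c_1\} \to V(S_2)\setminus\{c_2\}$, which I extend to a bijection $f\colon V(S_1)\to V(S_2)$ by setting $f(c_1) = c_2$.

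Finally I would verify that $f$ is an isomorphism of the free trees in the sense of Definition \ref{mark}. By Definition \ref{wer}, for a star graph an unordered pair $\{u,v\}$ of distinct vertices is an edge if and only if exactly one of $u, v$ equals the center. Since $f$ sends $c_1$ to $c_2$ and maps non-center vertices to non-center vertices, a pair $\{u,v\}$ contains $c_1$ exactly once precisely when $\{f(u),f(v)\}$ contains $c_2$ exactly once; this is exactly the equivalence $(\{u,v\}\in E(S_1)) \Leftrightarrow (\{f(u),f(v)\}\in E(S_2))$, so $S_1$ and $S_2$ are isomorphic. There is no genuine obstacle in this argument: the only two points deserving attention are the passage from equal vertex-cardinality to equal cardinality of the non-center sets in the infinite case, and the routine edge-preservation check just described.
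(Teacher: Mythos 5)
Your proof is correct and follows essentially the same route as the paper: the isometry gives $\mathrm{card}(V(S_1)) = \mathrm{card}(V(S_2))$ via \eqref{mg}, and then equal vertex cardinality forces the two star graphs to be isomorphic as free trees. The paper compresses the final step into a single sentence citing Definitions \ref{wer} and \ref{mark}, whereas you spell out the explicit center-to-center bijection and the edge-preservation check, including the finite/infinite cardinality point--a harmless and arguably welcome elaboration of the same argument.
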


\begin{proof}
Since every isometry is a bijection,  we have the equality
\begin{equation*}
\text{card}(X_1) = \text{card}(X_2).
\end{equation*}
The last equality and \eqref{mg} imply
\begin{equation}
    \label{dfg}
\text{card}(V(S_1)) = \text{card}(V(S_2)).
\end{equation}
It follows from \eqref{dfg}, Definition \ref{wer} and Definition \ref{mark}  that $S_1$ and $S_2$ are
isomorphic.
\end{proof}

The results in the rest of the section are mainly motivated by the following problem cf. Problems 3.1 and 3.2 from \cite{Dov2020TaAoG}.
\begin{problem}
    Let $T_1(l_1)$ and $T_2(l_2)$ be labeled trees generating ultrametric spaces $(X_1, d_1)$ and $(X_2, d_2)$ respectively. Find conditions under which the statements:
\begin{itemize}
    \item $T_1(l_1)$ and $T_2(l_2)$ are isomorphic
         \item $(X_1, d_1)$ and $(X_2, d_2)$ are isometric
     \end{itemize}
logically equivalent.
\end{problem}

\begin{theorem}
\label{sp}
Let $(X, d)$ be an ultrametric space generated by  labeled tree $T_1(l_1)$ and let $F \colon X \to X$ be a bijective mapping. Then the following statements are equivalent:

     {\rm (i)} $F$ is a self-isometry of $(X, d)$.

     {\rm (ii)} There is a labeled tree $T_2(l_2)$ such that $(X, d)$ is generated by $T_2(l_2)$ and $F$ is an isomorphism of $T_1(l_1)$ and $T_2(l_2)$.

\end{theorem}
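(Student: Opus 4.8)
The plan is to treat the two implications separately. The direction (ii) $\Rightarrow$ (i) is immediate from Proposition \ref{ljh}: if $F$ is an isomorphism of $T_1(l_1)$ and some labeled tree $T_2(l_2)$ that also generates $(X,d)$, then that proposition gives $d_{l_1}(u,v)=d_{l_2}(F(u),F(v))$ for all $u,v\in X$; since both trees generate $(X,d)$ we have $d=d_{l_1}=d_{l_2}$, so $d(u,v)=d(F(u),F(v))$ for all $u,v$, and $F$, being a bijection by hypothesis, is a self-isometry.

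For (i) $\Rightarrow$ (ii) I would build $T_2(l_2)$ by transporting the structure of $T_1$ along $F$. Concretely, I set $V(T_2)=X$, declare $\{p,q\}\in E(T_2)$ exactly when $\{F^{-1}(p),F^{-1}(q)\}\in E(T_1)$, and define $l_2(p)=l_1(F^{-1}(p))$ for all $p\in X$. With these definitions $F$ is, by construction, an isomorphism of $T_1(l_1)$ and $T_2(l_2)$ in the sense of Definition \ref{mark1}: the edge-equivalence holds by the definition of $E(T_2)$, and $l_2(F(v))=l_1(v)$ holds by the definition of $l_2$. Because $F$ is a bijection, $T_2$ is graph-isomorphic to the tree $T_1$ and is therefore itself a tree; hence $T_1(l_1)$ and $T_2(l_2)$ are genuinely labeled trees and Proposition \ref{ljh} is applicable to $F$.

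It then remains only to verify that $T_2(l_2)$ generates $(X,d)$, that is, $d_{l_2}=d$. Applying Proposition \ref{ljh} to the isomorphism $F$ yields $d_{l_2}(F(u),F(v))=d_{l_1}(u,v)=d(u,v)$ for all $u,v\in X$, while the self-isometry hypothesis gives $d(F(u),F(v))=d(u,v)$. Comparing the two equalities shows that $d_{l_2}$ and $d$ agree on every pair of the form $(F(u),F(v))$; since $F$ is a bijection of $X$, such pairs exhaust all pairs of points of $X$, so $d_{l_2}=d$ and $(X,d)$ is generated by $T_2(l_2)$, completing the implication.

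The step I expect to require the most care is this final exhaustion argument, where bijectivity of $F$ is used to upgrade agreement of $d_{l_2}$ and $d$ on the pairs $(F(u),F(v))$ to agreement on all pairs of $X$. I also note that no separate check that $l_2$ is non-degenerate is needed: once $d_{l_2}=d$ is established and $d$ is an ultrametric, Theorem \ref{t1.4} guarantees that $l_2$ is non-degenerate automatically.
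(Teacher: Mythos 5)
Your proof is correct and takes essentially the same route as the paper: the identical construction of $T_2(l_2)$ by pulling edges and labels back along $F^{-1}$, the same application of Proposition \ref{ljh}, and the same bijectivity-based exhaustion argument to conclude $d_{l_2}=d$ (and the same use of Proposition \ref{ljh} for the converse implication). Your closing remark on the non-degeneracy of $l_2$ via Theorem \ref{t1.4} is a small point the paper leaves implicit, but it does not change the argument.
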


\begin{proof}
 (i) $\Rightarrow$ (ii). Let (i) hold. Since  $(X, d)$ is generated by $T_1(l_1)$, we have the equality
\begin{equation}
\label{bn}
d= d_{l_1},
\end{equation}
where $d_{l_1}$ is defined as in \eqref{e1.1} with $T = T_1$.  Since $F$ is bijective, there is the inverse mapping $F^{-1}\colon X \to X$ of $F$.

Let us define a labeled tree $T_2 (l_2)$ such that $V(T_2) = X$, and
\begin{equation*}
\left(\{u, v\} \in E(T_2)\right)  \Leftrightarrow \left(\left\{F^{-1}(u), F^{-1}(v)\right\} \in E(T_1)\right),
\end{equation*}
and
\begin{equation*}
l_2(w) = l_1\left(F^{-1}(w)\right)
\end{equation*}
for all $u, v, w \in V(T_2)$.
Definitions \ref{mark} and \ref{mark1} imply that $F$ is an isomorphism of labeled trees $T_1(l_1)$ and $T_2(l_2)$.

Hence (ii) holds if $T_2(l_2)$ generates $(X, d)$, i.e., if we have the equality
\begin{equation}
\label{er}
d = d_{l_2}.
\end{equation}

Equality \eqref{bn} shows that \eqref{er} holds iff we have
\begin{equation}
\label{xi}
d_{l_1}(x, y) = d_{l_2}(x, y)
\end{equation}
for all $x, y \in X$.

Proposition \ref{ljh} with $V(T_{1}) = V(T_{2}) = X$, $f = F$ gives us the equality
\begin{equation}
\label{rtu}
d_{l_1}(u, v) = d_{l_2}(F(u), F(v))
\end{equation}
for all $u, v \in V(T_{1})$.
Since $F$ is a self-isometry of $(X, d)$ and \eqref{bn} holds, $F$ also is a self-isometry of $(V(T_{1}), d_{l_{1}})$.
Hence we may rewrite \eqref{rtu} as
\begin{equation}
\label{rtu1}
d_{l_1}(F(u), F(v)) = d_{l_2}(F(u), F(v)).
\end{equation}

Let us consider now arbitrary $x, y \in X$. Then the bijectivity of $F$ implies that there are
$u, v \in V(T)$ satisfying
\begin{equation}
\label{ssl}
x = F(u) \,\, \text{and} \,\, y = F(v).
\end{equation}

Now \eqref{xi} follows from \eqref{rtu1} and \eqref{ssl}.

(ii) $\Rightarrow$ (i). Let $T_{2}(l_2)$ be a labeled star graph generating $(X, d)$ and let $F\colon V(T_{1}) \to V(T_{2})$
be an isomorphism of $T_{1}(l_1)$ and $T_{2}(l_2)$.

Then the equalities
\begin{equation}
\label{klt}
d = d_{l_{1}}, \,\, d = d_{l_{2}},
\end{equation}
and
\begin{equation}
\label{gna}
    V(T_{1}) = X = V(T_{2})
\end{equation}
hold. Using Proposition \ref{ljh} and formulas \eqref{klt}, \eqref{gna} we obtain
\begin{equation*}
d(u, v) = d_{l_{1}}(u, v) = d_{l_{2}}(F(u), F(v))= d(F(u), F(v))
\end{equation*}
for all $u, v \in X$. Thus $F$ is a self-isometry of $(X, d)$ by Definition \ref{am}.

The proof is completed.

\end{proof}

\begin{corollary}
\label{twin} Let $(X, d)$ be an {\bf US}-space and let a labeled star graph $S_1(l_1)$ generate $(X, d)$. Then for every self-isometry $F\colon X \to X$ of $(X, d)$ there is a labeled star graph $S_2(l_2)$ such that $S_2(l_2)$ generates $(X, d)$ and $F$ is an isomorphism of $S_1(l_1)$ and $S_2(l_2)$.
\end{corollary}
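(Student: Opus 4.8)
The plan is to reduce the statement to Theorem~\ref{sp} and then argue that the tree produced there must itself be a star graph. Since every star graph is in particular a tree, I would regard the labeled star graph $S_1(l_1)$ as the labeled tree $T_1(l_1)$ appearing in the hypothesis of Theorem~\ref{sp}, which is legitimate because $S_1(l_1)$ generates $(X,d)$. As $F$ is a self-isometry of $(X,d)$, statement~(i) of Theorem~\ref{sp} holds, so statement~(ii) supplies a labeled tree $T_2(l_2)$ that generates $(X,d)$ and for which $F$ is an isomorphism of $T_1(l_1) = S_1(l_1)$ and $T_2(l_2)$.

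It then remains only to verify that $T_2$ is a star graph, after which I would simply put $S_2(l_2) := T_2(l_2)$. To see this, let $c$ denote the center of $S_1$. By Definition~\ref{mark} the isomorphism $F$ preserves adjacency in both directions, so the vertex $F(c)$ is adjacent in $T_2$ to every vertex distinct from $F(c)$, while no two vertices of $T_2$ other than $F(c)$ are joined by an edge. By Definition~\ref{wer} this precisely says that $T_2$ is a star graph with center $F(c)$.

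The only real content is this invariance of the star-graph property under labeled-tree isomorphism, and the hard part---such as it is---lies in translating the defining conditions of Definition~\ref{wer} through $F$; this is a direct consequence of the adjacency-preserving property in Definition~\ref{mark}. Once established, the labeled star graph $S_2(l_2) = T_2(l_2)$ generates $(X,d)$ and makes $F$ an isomorphism of $S_1(l_1)$ and $S_2(l_2)$, which is exactly the assertion of Corollary~\ref{twin}.
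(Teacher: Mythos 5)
Your proof is correct and takes essentially the same route as the paper, which states Corollary~\ref{twin} as an immediate consequence of Theorem~\ref{sp} without a separate proof. Your observation that the tree $T_2$ supplied by Theorem~\ref{sp} must itself be a star graph --- because the isomorphism $F$ preserves adjacency in both directions, so $F(c)$ inherits the defining property of the center in Definition~\ref{wer} --- is precisely the small verification the paper leaves implicit.
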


In what follows we denote   by $ \text{Iso }(X,d) $ the set of all
self-isometries of the
ultrametric space $ (X,d) $
and by $ \text{Iso }S(l) $ the set of
all self-iso\-morp\-hisms
of labeled star graph $ S(l) $.

Corollary \ref{twin} and Theorem \ref{aaa} give as the following.

\begin{corollary}\label{kram}
Let $(X, d)$ be an \textbf{US}-space and let $S(l)$ be
a labeled star graph generating $ (X,d) $.
Then the equality
\begin{equation*}
\inf D_0(X) = 0
\end{equation*}
implies the equality
\begin{equation}
\label{fgk}
\text{Iso }(X,d) = \text{Iso }S(l).
\end{equation}
\end{corollary}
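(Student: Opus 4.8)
The plan is to establish the two set-theoretic inclusions $\text{Iso }S(l) \subseteq \text{Iso }(X,d)$ and $\text{Iso }(X,d) \subseteq \text{Iso }S(l)$ separately, noting that the first holds for \emph{every} labeled star graph generating $(X,d)$, while the second is where the hypothesis $\inf D_0(X) = 0$ genuinely enters.

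For the inclusion $\text{Iso }S(l) \subseteq \text{Iso }(X,d)$, I would take an arbitrary self-isomorphism $F$ of $S(l)$ and apply Proposition~\ref{ljh} with $T_1 = T_2 = S$, $l_1 = l_2 = l$, and $f = F$. This immediately yields $d_l(u,v) = d_l(F(u), F(v))$ for all $u, v \in V(S)$. Since $S(l)$ generates $(X,d)$, we have $d = d_l$ and $X = V(S)$, so $F$ preserves $d$; being bijective, $F$ is then a self-isometry of $(X,d)$. It is worth emphasizing that this direction uses neither $\inf D_0(X) = 0$ nor the uniqueness of the generating star graph.

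For the reverse inclusion $\text{Iso }(X,d) \subseteq \text{Iso }S(l)$, I would start with an arbitrary self-isometry $F$ of $(X,d)$ and invoke Corollary~\ref{twin}: there exists a labeled star graph $S_2(l_2)$ generating $(X,d)$ such that $F$ is an isomorphism of $S(l)$ and $S_2(l_2)$. The crucial step is then to identify $S_2(l_2)$ with $S(l)$. Since $\inf D_0(X) = 0$, Theorem~\ref{aaa} guarantees that $(X,d)$ admits a \emph{unique} labeled star graph generating it. As both $S(l)$ and $S_2(l_2)$ generate $(X,d)$, this uniqueness forces $S_2 = S$ and $l_2 = l$. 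Consequently $F$ is an isomorphism of $S(l)$ with itself, that is, $F \in \text{Iso }S(l)$.

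The main obstacle—though it is not a genuine difficulty once the earlier results are in place—is the passage from ``$F$ is an isomorphism onto \emph{some} generating star graph $S_2(l_2)$'' to ``$F$ fixes $S(l)$.'' This is precisely the content of the uniqueness clause of Theorem~\ref{aaa}, so the whole weight of the corollary rests on combining Corollary~\ref{twin} with that uniqueness statement; no additional estimates or constructions are required.
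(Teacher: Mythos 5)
Your proof is correct and follows essentially the same route as the paper, which derives the corollary from Corollary~\ref{twin} combined with the uniqueness implication (iii)\,$\Rightarrow$\,(i) of Theorem~\ref{aaa}; your explicit treatment of the easy inclusion $\text{Iso }S(l) \subseteq \text{Iso }(X,d)$ via Proposition~\ref{ljh} is exactly what the paper leaves implicit (it is the content of the (ii)\,$\Rightarrow$\,(i) direction of Theorem~\ref{sp}).
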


\begin{example}\label{ek}
 Let $(X, d)$ be a two-point \textbf{US}-space, $X = \{x_1, x_2\}$.
Then the labeled star graph $S(l)$ with $V(S) = X$ and
\begin{equation*}
 l(x) =
 \begin{cases}
d(x_1,x_2), & \text{if } x = x_1, \\
0, & \text{if } x = x_2,
\end{cases}
\end{equation*}
generates $(X, d)$, but we have
\begin{equation*}
F \in \text{Iso }(X, d) \,\, \text{and} \,\, F \notin \text{Iso }(S(l))
\end{equation*}
for  $F\colon X \to X$ defined by
\begin{equation*}
F(x_1) = x_2 \,\, \text{and} \,\, F(x_2) = x_1.
\end{equation*}
\end{example}

The next theorem shows that equality \eqref{fgk} can be satisfied even if $\inf D_0(X)>0.$

\begin{theorem}
    \label{iuyr}
 Let $(X,d)$ be an ${\bf US}$-space. Then the following statements are equivalent:

    {\rm (i)} The equality
    \begin{equation*}
        \text{Iso }(X,d) = \text{Iso }S(l)
    \end{equation*}
    holds for each labeled star graph $S(l)$ generating $(X,d)$.

 {\rm (ii)} The set $D_0$ has no least element.
\end{theorem}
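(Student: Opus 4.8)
The plan is to reduce the whole statement to the behaviour of the \emph{centers} of the generating star graphs, where by a center I mean a point $x_0$ satisfying condition (ii) of Theorem~\ref{xz}. First I would record the easy half-inclusion: a self-isomorphism of $S(l)$ is in particular an isomorphism of the labeled tree $S(l)$ onto itself, so by Proposition~\ref{ljh} it is a self-isometry; thus $\text{Iso}\,S(l) \subseteq \text{Iso}(X,d)$ always holds. Consequently statement (i) is equivalent to the reverse inclusion $\text{Iso}(X,d)\subseteq\text{Iso}\,S(l)$ for every generating $S(l)$, and the theorem becomes the question of when a self-isometry can fail to be a self-isomorphism. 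I would also note the structural fact that, for a center $c$, the minimality property \eqref{zg} gives $d(c,x)\le d(x,y)$ and $d(c,y)\le d(x,y)$, so the strong triangle inequality forces $d(x,y)=\max\{d(c,x),d(c,y)\}$; hence every nonzero distance is realized from $c$ and $D_0=\{d(c,x):x\neq c\}$.

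The heart of the argument is a dichotomy: $(X,d)$ has a \emph{unique} center if and only if $D_0$ has no least element. If $c$ and $c'$ are two distinct centers, then applying \eqref{zg} with $x_0=c'$ and $x=c$ gives $d(c,c')\le d(y,c)$ for all $y\neq c$, so $d(c,c')=\min_{y\neq c}d(y,c)$ is attained and equals $\min D_0$; thus a second center forces a least element. Conversely, if $m:=\min D_0$ exists I would pick $c'$ with $d(c,c')=m$ and check directly that $c'$ is again a center: for $x\notin\{c,c'\}$ the center property of $c$ yields $d(c,x)\le d(c',x)$, while the strong triangle inequality together with $d(c,c')=m\le d(c,x)$ yields $d(c',x)\le d(c,x)$, so $d(c',x)=d(c,x)$; the cases $x\in\{c,c'\}$ are immediate. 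Hence a least element produces a genuine second center.

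For (ii) $\Rightarrow$ (i) I would argue that when $D_0$ has no least element the center $c$ is unique, so every self-isometry $F$ maps the metrically characterized center to a center and therefore fixes $c$. Fix any generating $S(l)$; its graph center is a center by the proof of Theorem~\ref{xz}, hence equals $c$. From $\max\{l(c),l(x)\}=d(c,x)$ I get $l(c)\le\inf D_0$, and since $D_0$ has no least element every leaf $x$ satisfies $d(c,x)>l(c)$ (either $\inf D_0=0$, forcing $l(c)=0$, or $\inf D_0>0$ is not attained, so $d(c,x)>\inf D_0\ge l(c)$); this forces $l(x)=d(c,x)$. As $F$ is an isometry fixing $c$, each leaf obeys $l(F(x))=d(c,F(x))=d(c,x)=l(x)$, so $F$ preserves all labels and, taking center to center and leaves to leaves, is a self-isomorphism by Definition~\ref{mark1}. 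This gives equality for every generating $S(l)$.

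For the contrapositive of (i) $\Rightarrow$ (ii), suppose $m=\min D_0$ exists and take centers $c,c'$ with $d(c,c')=m$ as above. The transposition $F$ interchanging $c$ and $c'$ and fixing all other points is a self-isometry, since $d(c',x)=d(c,x)$ for $x\notin\{c,c'\}$ was established in the dichotomy and the remaining checks are trivial. Using the canonical generating star graph $S(l^0)$ with $l^0(c)=0$ and $l^0(x)=d(c,x)$ for $x\neq c$, we have $l^0(c')=m>0=l^0(c)$, so $F$ violates the label condition of Definition~\ref{mark1} and lies in $\text{Iso}(X,d)\setminus\text{Iso}\,S(l^0)$; thus (i) fails. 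I expect the main obstacle to be the center dichotomy — specifically, verifying cleanly that a least element of $D_0$ yields a second center and that the transposition of two centers is an isometry — together with the bookkeeping ensuring $l(x)>l(c)$ for all leaves in the uniqueness case, which is exactly where the hypothesis that $D_0$ has no least element is consumed.
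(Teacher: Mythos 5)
Your proof is correct, but its second half follows a genuinely different route from the paper's. For (i) $\Rightarrow$ (ii) the two arguments essentially coincide: the paper works with the canonical generating star graph $S(l)$ having $l(c)=0$, picks a vertex $v^*$ of minimal label, and shows that the transposition of $c$ and $v^*$ is a self-isometry but not a self-isomorphism of $S(l)$; your transposition of the two centers $c$ and $c'$ is exactly this map, merely repackaged through your center dichotomy. For (ii) $\Rightarrow$ (i), however, the paper argues quite differently: it splits into the cases $\inf D_0(X)=0$ and $\inf D_0(X)>0$, disposes of the first via Corollary~\ref{kram} (which in turn rests on Corollary~\ref{twin}, i.e.\ Theorem~\ref{sp}, and on Theorem~\ref{aaa}), and reduces the second to the first by a shift trick — replacing $l$ by $l^*(u)=l(u)-\inf D_0(X,d)$ and $d$ by $d^*(x,y)=d(x,y)-\inf D_0(X,d)$, then checking that $\text{Iso }S(l)=\text{Iso }S(l^*)$, $\text{Iso }(X,d)=\text{Iso }(X,d^*)$, and that $S(l^*)$ generates $(X,d^*)$ with $\inf D_0(X,d^*)=0$. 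You instead give a direct, self-contained argument: the metric center in the sense of \eqref{zg} is unique when $D_0$ has no least element, every self-isometry must fix it, and the no-least-element hypothesis forces $l(x)=d(c,x)>l(c)$ for every non-center vertex of \emph{any} generating star graph, so distance preservation automatically yields label and edge preservation, hence membership in $\text{Iso }S(l)$ by Definition~\ref{mark1}. Your approach buys independence from Theorems~\ref{aaa} and \ref{sp} and isolates a clean structural fact of independent interest (a second center exists if and only if $\min D_0$ exists, since $d(c,c')=\min D_0$ for any two centers); the paper's approach buys brevity by reusing its earlier machinery, and its shift trick exhibits a translation invariance of the problem that your argument does not make visible. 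Both treatments handle the degenerate singleton case (explicitly in the paper, vacuously in yours), so I see no gap.
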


\begin{proof}

Let us consider first the case when
\begin{equation}
\label{dfgg}
    \text{card}(X) = 1.
\end{equation}

Then $X$ is a singleton set, $X = \{ x_0 \}$, so that all elements of $\text{Iso }(X,d)$ and $\text{Iso } S(l)$ coincide with the mapping
\begin{equation*}
    X \ni x_0 \mapsto x_0 \in X.
\end{equation*}
Hence (i) is true.
Moreover, if \eqref{dfgg} holds, then the set $D_0$ is empty by \eqref{ew1}. Thus (ii) is also true.

Let us consider now the case when
\begin{equation}
\label{fgjutt}
 \text{card}(X) \geq 2.
\end{equation}

(i) $\Rightarrow$ (ii).
Let (i) hold. We must show that statement (ii) also holds.

As in the proof of Theorem \ref{xz} (see formula \eqref{oi}), we can find a labeled star graph $S(l)$ generating $(X,d)$ such that
\begin{equation}
\label{pii}
l(c) = 0,
\end{equation}
where $c$ is a fixed center of $S$.
Then equality \eqref{e1.1} with $T(l) = S(l)$, and formulas \eqref{ew}--\eqref{ew1}, and formula \eqref{pii} give us the equality
\begin{equation}
\label{zaq}
D_0(X) = \{ l(u) \colon u \in V(S) \setminus \{c\} \}.
\end{equation}

If (ii) false, then \eqref{zaq} implies that there is $v^* \in V(S) \setminus \{c\}$ such that
\begin{equation}
\label{sdwe}
l(v^*) \leq l(u)
\end{equation}
for each $u \in V(S) \setminus \{c\}$.

Let us define a mapping $F\colon X \to X$ by the rule
\begin{equation}
\label{mnngf}
F(x) =
\begin{cases}
v^*, & \text{if } x = c, \\
c, & \text{if } x = v^*, \\
x, & \text{if } c \neq x \neq v^*.
\end{cases}
\end{equation}
Since $\{v^*, c\}$ is an edge of $S$ and $l\colon V(S) \to \mathbb R^+$ is non-degenerate,
inequality \eqref{t1.4_eq1} and equality \eqref{sdwe} imply $l(v^*) > 0.$ Hence we have
\begin{equation}
\label{jgfdv}
l(F(c)) = l(v^*) > 0 = l(c)
\end{equation}
by \eqref{fgjutt} and \eqref{pii}. Thus $F$ is
not a self-isomorphism of $S(l)$,
\begin{equation}
\label{piet}
F \notin \text{Iso } S(l).
\end{equation}

We claim that the membership relation
\begin{equation}
\label{vhnsg}
F \in \text{Iso } (X,d)
\end{equation}
is true.

Let $u$ and $v$ be arbitrary non-equal points of $X$.
To prove \eqref{vhnsg} it suffices to show that the equality
\begin{equation}
\label{zqki}
d(u,v) = d(F(u), F(v))
\end{equation}
holds.
If we have
\begin{equation*}
\{u,v\} = \{v^*, c\} \,\, \text{or} \,\, \{u,v\} \cap \{v^*, c\} = \emptyset,
\end{equation*}
then \eqref{zqki} follows from \eqref{mnngf}.

Let us consider the case when the set
$
\{u,v\} \cap \{v^*, c\}
$
contains only one point.
Then, without loss of generality, we assume
\begin{equation*}
u \notin \{v^*, c\}
\end{equation*}
and
\begin{equation*}
v = c \,\, \text{or} \,\, v = v^*.
\end{equation*}

If we have $v=c,$ then \eqref{zqki} holds iff
\begin{equation}
\label{jhbeg}
d(u, c) = d(F(u), F(c)).
\end{equation}
Analogously, if we have $v=v^*,$ then \eqref{zqki} holds iff
\begin{equation}
\label{fhjoe}
d(u, v^*) = d(F(u), F(v^*)).
\end{equation}

Using \eqref{mnngf}, we rewrite \eqref{jhbeg}--\eqref{fhjoe} as the following single equality
\begin{equation}
\label{rggyji}
d(u, c) = d(u, v^*).
\end{equation}
Since $ T(l) $ generates $ (X, d) $, we also have the equality $ d = d_l $.
Thus \eqref{rggyji} holds iff
\begin{equation}
\label{ujrfv}
d_l(u, c) = d_l(u, v^*).
\end{equation}
Now using \eqref{e1.1} and \eqref{pii} we obtain
\begin{equation}
\label{sakl}
d_l(u, c) = \max \{ l(u), l(c) \} = l(u).
\end{equation}
Analogously \eqref{e1.1} and \eqref{sdwe} give us
\begin{equation}
\label{rybn}
d_l(u, v^*) = \max \{ l(u), l(v^*) \} = l(u).
\end{equation}

Equality \eqref{ujrfv} follows from \eqref{sakl} and \eqref{rybn}.
Thus, by formulas \eqref{piet} and \eqref{vhnsg} we have
\begin{equation*}
F \in \text{Iso }(X, d) \setminus \text{Iso } S(l),
\end{equation*}
which contradicts statement (i).

(ii) $\Rightarrow$ (i). Let statement (ii) hold.
We must show that the equality
\begin{equation}
\label{dgnuj}
\text{Iso } (X, d) = \text{Iso } S(l)
\end{equation}
holds for all labeled star graphs $S(l)$ generating $(X,d).$

If the equality  $ \inf D_0(X) = 0$
holds, then \eqref{dgnuj} follows from Corollary\,\ref{kram}.

Let us consider now the case
\begin{equation*}
\inf D_0(X) > 0.
\end{equation*}

Let $S(l)$ be a labeled star graph generating $(X,d)$. Statement (ii) implies that $X$ contains infinitely many points and, consequently, the free star graph $S$ has the unique center $c$.

It follows from Definition\,\ref{wer} and formula \eqref{e1.1} with $T(l) = S(l)$ that
\begin{equation}
\label{fbjls}
l(c)\leq \inf D_0(X,d_{l}).
\end{equation}
Since $S(l)$ generates $(X,d)$, the equality $d = d_{l}$ holds. Hence we may rewrite \eqref{fbjls} as
\begin{equation}
\label{iehonl}
l(c) \leq \inf D_0(X).
\end{equation}

Let us define now a new labeling $l^*\colon V(S) \to \mathbb{R}^+$ as
\begin{equation}
\label{iiyopp}
    l^*(u) =
    \begin{cases}
        0, & \text{if } u = c, \\
        l(u)-\inf D_0(X,d), & \text{if } u \neq c.
    \end{cases}
\end{equation}

Then Definition \ref{mark} and Definition\ref{mark1} give us the equality
\begin{equation}
\label{eftgyh}
    \text{Iso }S(l)= \text{Iso } S(l^*).
\end{equation}

Let us also define a new ultrametric $d^*$ on $X$ by the rule
\begin{equation}
\label{ddrgkk}
    d^*(x,y) =
    \begin{cases}
        d(x,y) - \inf D_0(X,d), & \text{if } x \neq y, \\
        0, & \text{if } x = y.
    \end{cases}
\end{equation}

To prove that $d^*$ is really an ultrametric on $X$ we note that statement (ii) gives us the inequality
\begin{equation*}
    d(x,y) > \inf D_0(X,d)
\end{equation*}
for all distinct points $x, y \in X$ and that the strong triangle inequality for $d^*$ follows from the strong triangle inequality for $d$.

Since for every nonempty $A \subseteq \mathbb{R}^+$ and each $t \in [0, \inf A]$ we have the equality
\begin{equation}
t+\inf\{a-t: a\in A\}=\inf A,
\end{equation}
formulas \eqref{ew}, \eqref{ew1} and \eqref{eftgyh} imply
\begin{equation}
\label{weevbg}
    D_0(X,d^*) = 0.
\end{equation}
Moreover, using \eqref{ddrgkk} it is easy to prove  the equality
\begin{equation}
\label{mooi}
    \text{Iso }(X, d^*) = \text{Iso }(X, d).
\end{equation}
Equalities \eqref{eftgyh} and \eqref{mooi} show that \eqref{dgnuj} holds
iff
\begin{equation}
\label{zcwwko}
\text{Iso }(X,d^*) = \text{Iso }S(l^*).
\end{equation}
Using Corollary \ref{kram} and equality \eqref{weevbg}, we see that to prove equality \eqref{zcwwko} it is enough to check that $S(l^*)$ generates $(X,d^*)$,  i.e.,
\begin{equation*}
d_{l^*} = d^*.
\end{equation*}
 The last equality follows the equality $d = d_l$,
\eqref{iiyopp}, \eqref{ddrgkk} and formula \eqref{e1.1} with $T(l) = S(l^*)$.

The proof is completed.

\end{proof}

\begin{example}
Let $X = \{0\} \cup (1, \infty)$ and let $S(l_t)$ be a labeled star graph with the center $c = 0$, the vertex set
\begin{equation*}
    V(S) = X,
\end{equation*}
and the labeling  $l_t\colon V(S) \to \mathbb{R}^+$
defined by
\begin{equation*}
l_t(x) =
\begin{cases}
t, & \text{if } x = 0, \\
x, & \text{otherwise},
\end{cases}
\end{equation*}
where $t$ is a point of the closed interval $[0,1]$.
Let us denote by $d$ the restriction $ d^+_{|X\times X}$ of the ultrametric of $d^+$ defined in Example~\ref{fgjkdfb}. Then, for every $t \in [0,1]$, the equality
\begin{equation*}
\text{Iso }(X, d) = \text{Iso } S(l_t)
\end{equation*}
holds by Theorem \ref{iuyr}.
\end{example}

\begin{remark}
Theorems \ref{aaa} and \ref{iuyr} show that the properties of the range set of ultrametrics  are important for descriptions of labeled star graphs representing ${\bf US}$-spaces.
It is interesting to note that compactness and separability of ultrametrizable topological spaces $(X,\tau)$ can be also described via properties of range set of compatible with topology $\tau$ ultrametrics \cite{Dovg-Sh}.
\end{remark}

\section{Two conjectures}

\hspace{ 4mm }
It was noted in Corollary \ref{zem} that every ultrametric space $(X,d)$ with $\text{card}(X) \leq 3$ is an {\bf US}-space. Below we will use the four-point ultrametric spaces which do not belong to {\bf US}.

\vspace{-5mm}

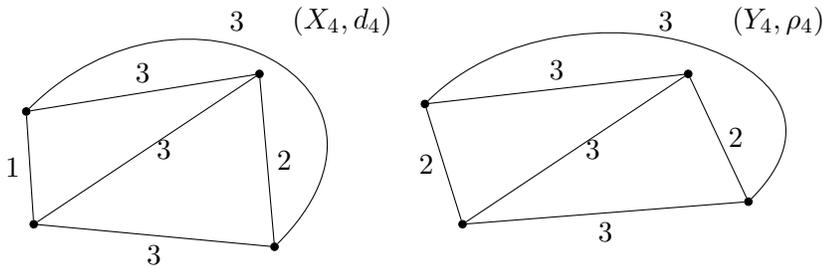
\begin{figure}[H]\label{cis}
\centering
\begin{tikzpicture}[remember picture]

  \node[draw=none] (label1) at (2.7,2.7) {$3$};
  \node[draw=none] (label1) at (4.1,2.7) {$(X_4,d_4)$};
    \node[draw, circle, fill=black, inner sep=1pt] (A) at (0,0) {};
    \node[draw, circle, fill=black, inner sep=1pt] (B) at (3.2,-0.3) {};
    \node[draw, circle, fill=black, inner sep=1pt] (C) at (-0.1,1.5) {};
    \node[draw, circle, fill=black, inner sep=1pt] (D) at (3,2) {};

    \draw (A) -- (B) node[midway, below] {$3$};
    \draw (A) -- (C) node[midway, left] {$1$};
    \draw (A) -- (D) node[midway, right] {$3$};
    \draw (C) -- (D) node[midway, above] {$3$};
    \draw (B) -- (D) node[midway, right] {$2$};
    \draw (B) to[out=45,in=45,looseness=2] (C) node[midway,right] {};

  \begin{scope}[xshift=5.7cm]
    \node[draw=none] (label2) at (2.7,2.7) {$3$};
    \node[draw=none] (label2) at (4.2,2.7) {$(Y_4,\rho_4)$};
    \node[draw, circle, fill=black, inner sep=1pt] (A) at (0,0) {};
    \node[draw, circle, fill=black, inner sep=1pt] (B) at (3.8,0.3) {};
    \node[draw, circle, fill=black, inner sep=1pt] (C) at (-0.5,1.6) {};
    \node[draw, circle, fill=black, inner sep=1pt] (D) at (3,2) {};

    \draw (A) -- (B) node[midway, below] {$3$};
    \draw (A) -- (C) node[midway, left] {$2$};
    \draw (A) -- (D) node[midway, right] {$3$};
    \draw (C) -- (D) node[midway, above] {$3$};
    \draw (B) -- (D) node[midway, right] {$2$};
    \draw (B) to[out=45,in=45,looseness=1.5] (C) node[midway,right] {};
  \end{scope}

\end{tikzpicture}
\caption{ $(X_4,d_4)$ and $(Y_4,\rho_4)$ are not ${\bf US}$-spaces.}
\end{figure}

\vspace{-4mm}

\begin{example}\label{exp}
The four-point ultrametric spaces $(X_4, d_4)$ and $(Y_4,\rho_4)$ depicted in Figure~3 
cannot be generated by any labeled star graphs. This follows easily from Theorem \ref{xz}.
\end{example}

Let us recall now the concept of weakly similar ultrametric spaces.
\begin{definition}
\label{scguite}
Let $(X, d)$ and $(Y, \rho)$ be ultrametric spaces.
 A bijective mapping $\Phi\colon X \to Y$ is a {\it weak similarity} if there is a strictly increasing function $f\colon D(Y) \to D(X)$ such that the equality
\begin{equation*}
d(x, y) = f \left( \rho \left( \Phi(x), \Phi(y) \right) \right)
\end{equation*}
holds for all $x, y \in X$.
\end{definition}

 The authors believe that the following conjecture is true.
\begin{conjecture}
\label{sepjtg}
The following statements are equivalent for every finite ultrametric space $(X^*,d^*)$:

{\rm (i)} $(X^*, d^*) \notin {\bf US}.$

{\rm (ii)} $(X^*, d^*)$  contains a four-point subspace which is weakly similar either to  $(X_4,d_4)$ or  to $(Y_4,\rho_4)$.
\end{conjecture}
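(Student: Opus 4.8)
The plan is to recast $\mathbf{US}$-membership in a more symmetric form and then run everything through the top-level ball decomposition of a finite ultrametric space. First I would note that, since the strong triangle inequality already gives $d(u,v)\le\max\{d(c,u),d(c,v)\}$, the criterion of Theorem~\ref{xz} is equivalent to the existence of a point $c\in X$ (a \emph{center}) with
\[
d(u,v)=\max\{d(c,u),d(c,v)\}\qquad\text{for all }u,v\in X.
\]
Two consequences are then immediate. Because a strictly increasing $f$ commutes with $\max$, a weak similarity (Definition~\ref{scguite}) sends a center of one space to a center of the other, so membership in $\mathbf{US}$ is a weak-similarity invariant; by Example~\ref{exp} no finite space weakly similar to $(X_4,d_4)$ or $(Y_4,\rho_4)$ can lie in $\mathbf{US}$. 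Moreover $\mathbf{US}$ is hereditary for finite subspaces: if $c$ is a center of $X$ and $Y\subseteq X$ is finite, then choosing $c'\in Y$ minimizing $d(c,c')$ yields $d(c',u)=\max\{d(c,c'),d(c,u)\}=d(c,u)$ for every $u\in Y$, so $c'$ is a center of $Y$.

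With these in hand the implication (ii)$\Rightarrow$(i) is the easy half: arguing contrapositively, if $(X^*,d^*)\in\mathbf{US}$ then every four-point subspace lies in $\mathbf{US}$ by heredity, hence none is weakly similar to $(X_4,d_4)$ or $(Y_4,\rho_4)$.

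For (i)$\Rightarrow$(ii) I would induct on $\text{card}(X^*)$, the cases $\text{card}(X^*)\le 3$ being vacuous by Corollary~\ref{zem}. Put $\Delta=\max D_0(X^*)$ and let $B_1,\dots,B_k$ with $k\ge 2$ be the classes of the relation ``$d^*(x,y)<\Delta$'', so distinct classes sit at mutual distance exactly $\Delta$. The structural heart is a gluing lemma, proved just like heredity: if at most one $B_i$ has more than one point and that heavy ball (when present) is in $\mathbf{US}$, then a center of the heavy ball is again a center of $X^*$, so $X^*\in\mathbf{US}$. Taking the contrapositive, from $X^*\notin\mathbf{US}$ either two classes, say $B_1$ and $B_2$, each have at least two points, or exactly one class $B_1$ is heavy and $B_1\notin\mathbf{US}$; in the latter case $B_1\subsetneq X^*$ and the induction hypothesis already yields the desired subspace inside $B_1$.

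It remains to handle the first alternative, which is exactly where the two models surface. Choosing distinct $a_1,a_2\in B_1$ and distinct $b_1,b_2\in B_2$ and writing $\alpha=d^*(a_1,a_2)$, $\beta=d^*(b_1,b_2)$, one has $\alpha,\beta<\Delta$ while all four distances $d^*(a_i,b_j)$ equal $\Delta$. Thus $\{a_1,a_2,b_1,b_2\}$ has close pairs $\{a_1,a_2\}$, $\{b_1,b_2\}$ and uniform cross-distance $\Delta$; matching these close pairs to the two short edges of the model and $\Delta$ to the diameter $3$ exhibits a weak similarity to $(X_4,d_4)$ when $\alpha\ne\beta$ and to $(Y_4,\rho_4)$ when $\alpha=\beta$, the required strictly increasing $f$ being read off the three- or two-element nonzero distance set. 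I expect the main difficulty to be organizational rather than deep: one must state and verify the gluing and heredity lemmas precisely enough that the induction closes, and check that the bijection respects the partition into close pairs so that $f$ indeed comes out strictly increasing. It is worth stressing that finiteness enters my approach essentially — through the attainment of the diameter, the existence of the nearest point $c'$, and the induction on cardinality — in keeping with the restriction of the statement to finite spaces and with the fact that the four-point condition is known to behave differently in the infinite setting.
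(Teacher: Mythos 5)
There is nothing in the paper to compare your argument against: the statement is Conjecture~\ref{sepjtg}, which the authors explicitly leave open (``The authors believe that the following conjecture is true''), supporting it only with Example~\ref{exp} and with a final example showing that the equivalence fails for infinite spaces. So the relevant question is whether your blind attempt is itself a correct proof, and after checking it step by step I believe it is, up to two harmless slips. Your reformulation of Theorem~\ref{xz} --- $(X,d)\in\mathbf{US}$ iff some $c\in X$ satisfies $d(u,v)=\max\{d(c,u),d(c,v)\}$ --- is correct provided you require $u\neq v$ (for $u=v\neq c$ the two sides differ); with it, weak-similarity invariance of $\mathbf{US}$-membership is immediate (a strictly increasing $f$ commutes with $\max$, and $f$ is automatically a bijection of the distance sets, so weak similarity is a symmetric relation), and heredity for finite subspaces is correct, though your identity $d(c',u)=\max\{d(c,c'),d(c,u)\}=d(c,u)$ should be asserted only for $u\neq c'$, which is all you use. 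These two lemmas give (ii)$\Rightarrow$(i), since $(X_4,d_4),(Y_4,\rho_4)\notin\mathbf{US}$ by Example~\ref{exp}.

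For (i)$\Rightarrow$(ii), your decomposition into the equivalence classes of the relation $d^*(x,y)<\Delta$ with $\Delta=\max D_0(X^*)$ is sound: cross-class distances are exactly $\Delta$, there are at least two classes because $\Delta$ is attained, and your gluing lemma (a center of the unique heavy class, or any point at all if every class is a singleton, is a center of $X^*$) is verified by a routine three-case computation. The resulting trichotomy forces either two heavy classes --- whence the four points $a_1,a_2,b_1,b_2$ with $0<\alpha,\beta<\Delta$ and four cross-distances equal to $\Delta$, which are weakly similar to $(X_4,d_4)$ when $\alpha\neq\beta$ (take $f(0)=0$, $f(1)=\min\{\alpha,\beta\}$, $f(2)=\max\{\alpha,\beta\}$, $f(3)=\Delta$) and to $(Y_4,\rho_4)$ when $\alpha=\beta$ --- or a single heavy class lying outside $\mathbf{US}$, to which the induction hypothesis applies since each class is a proper subset of $X^*$; the all-singleton case contradicts (i), and Corollary~\ref{zem} disposes of cardinalities at most $3$. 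Finiteness enters exactly where you say: attainment of $\Delta$, attainment of the nearest point in the heredity lemma, and the induction itself, which is consistent with the paper's infinite counterexample. In short, modulo writing out the two caveats and the routine verifications, you appear to have a complete proof of a statement the paper only conjectures; it should be checked with the care appropriate to a claim that settles an open question, but I found no gap.
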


\begin{remark}
The concept of weak similarity was introduced in \cite{Dovgoshey-Petrov} for the case of semimetric spaces.
\end{remark}

The next examples shows that statements (i) and (ii) of Conjecture \ref{sepjtg} are, in general, not equivalent if $X^*$ is an infinite set.

\begin{example}
    \label{£5^[77]}
The ultrametric space $(V(R),d_l)$ generated by labeled ray $R(l)$ admits an isometric embedding in the space $(V(S),d_{l^*})$ but $(V(R),d_l)\notin {\bf US}$ by Theorem \ref{xz} (see Figure~4).
\end{example}

\begin{figure}[h!]
    \centering
    \begin{tikzpicture}[every node/.style={scale=0.9, inner sep=0pt, outer sep=0pt}]
            \draw (-1,1) -- (0,1) -- (1,1);
            \node at (1.72,1) {$\dots$};
            \draw (2.35,1) -- (3.25,1) -- (3.7,1) -- (4.15,1);
             \node at (4.48,1) {$\dots$};

            \foreach \n in {-1,0,1}
                \node[circle,draw,fill=black,inner sep=1pt] at (\n,1) {};

            \node[circle,draw,fill=white,inner sep=3.6pt] at (-1,0.46) {$1$};
            \node[circle,draw,fill=white,inner sep=1.8pt] at (0,0.46) {$\frac{1}{2}$};
            \node[circle,draw,fill=white,inner sep=1.8pt] at (1,0.46) {$\frac{1}{3}$};

            \node at (1.72,0.46) {$\dots$};

            \foreach \n in {2.35,3.25,4.15}
                \node[circle,draw,fill=black,inner sep=1pt] at (\n,1) {};

            \node [circle,draw,fill=white,inner sep=1.8pt] at (2.35,0.46) {$\frac{1}{n}$};
            \node [circle,draw,fill=white,inner sep=-0.27pt] at (3.25,0.46) {$\frac{1}{n+1}$};
            \node [circle,draw,fill=white,inner sep=-0.27pt] at (4.15,0.46) {$\frac{1}{n+2}$};

            \node at (1.6,1.4) {$R(l)$};

        \node[circle,draw] (o) at (7,0) {0};

        \node[circle,draw,fill=white,inner sep=2pt] at (9,0) (n1) {$1$}; 
        \node[circle,draw,fill=white,inner sep=1pt] at (8.7,0.9) (n2) {$\frac{1}{2}$};
        \node[circle,draw,fill=white,inner sep=1pt] at (8.2,1.6) (n3) {$\frac{1}{3}$};
        \node[circle,draw,fill=white,inner sep=1pt] at (7.5,2.1) (n4) {};
        \node[circle,draw,fill=white,inner sep=1pt] at (7,2.3) (n5) {};
        \node[circle,draw,fill=white,inner sep=1pt] at (6.5,2.1) (n6) {$\frac{1}{n}$};
        \node[circle,draw,fill=white,inner sep=0.2pt] at (5.8,1.6) (n7) {$\frac{1}{n+1}$};
        \node[circle,draw,fill=white,inner sep=0.2pt] at (5.3,0.9) (n8) {$\frac{1}{n+2}$};
        \node[circle,draw,fill=white,inner sep=1pt] at (5,0) (n9) {};
        \node[circle,draw,fill=white,inner sep=1pt] at (5.3,-0.9) (n10) {};

        \draw (o) -- (n1);
        \draw (o) -- (n2);
        \draw (o) -- (n3);
        \draw[dashed] (o) -- (n4);
        \draw[dashed] (o) -- (n5);
        \draw (o) -- (n6);
        \draw (o) -- (n7);
        \draw (o) -- (n8);
        \draw[dashed] (o) -- (n9);
        \draw[dashed] (o) -- (n10);

        \node at (8.5,2.6) {$S(l^*)$};

    \end{tikzpicture}
    \caption{$(V(S),d_{l^*}) \in {\bf US}$ is a completion of $(V(R),d_l)\notin {\bf US}$.}
\end{figure}
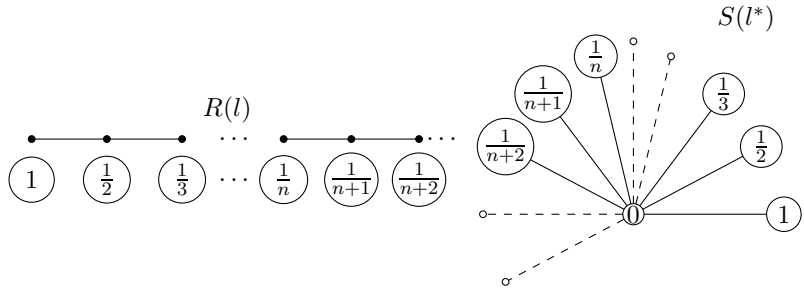

\begin{conjecture}
\label{tghjnk}
    Let $(X,d)$ be an infinite ${\bf US}$-space generated by labeled star graph with a center $c$,
    let $X_0 := X \setminus \{c\}$ and let $d_0$ be the restriction of $d$ on the set $X_0\times X_0$.
    Then the following statements are equivalent:

    {\rm(i)} $(X, d)$ is compact.

   {\rm (ii)} There is a labeled ray $R(l)$ generating $(X_0, d_0)$ such that
    \begin{equation*}
        E(R) = \{ \{ x_1, x_2\}, \{ x_2, x_3\}, \dots, \{x_n, x_{n+1}\}, \dots \}
    \end{equation*}
    and
    \begin{equation*}
        l(x_n) \geq l(x_{n+1}) > 0
    \end{equation*}
    for every integer $n \in \mathbb{N}$, and the limit relation
    \begin{equation*}
        \lim\limits_{n \to \infty} l(x_n) = 0
    \end{equation*}
    holds.

\end{conjecture}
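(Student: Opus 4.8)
The plan is to reduce compactness to total boundedness and then to translate the latter into the decreasing-label ray representation. Since $(X,d)$ is infinite, the generating star graph has a unique center $c$, and I put $r(x):=d(c,x)>0$ for every $x\in X_0$. As shown inside the proof of Theorem~\ref{xz}, the center satisfies $d(c,x)\le d(y,x)$ whenever $c\ne x\ne y$; hence for distinct $x,y\in X_0$ one has $\max\{r(x),r(y)\}\le d(x,y)$, while the strong triangle inequality gives $d(x,y)\le\max\{d(x,c),d(c,y)\}=\max\{r(x),r(y)\}$. Therefore
\[
d_0(x,y)=\max\{r(x),r(y)\}\qquad(x\ne y\text{ in }X_0),
\]
and this single formula will drive the whole argument.

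First I would prove that $(X,d)$ is complete, so that compactness is equivalent to total boundedness. Let $(z_n)$ be Cauchy. If some point occurs infinitely often, the Cauchy condition forces $(z_n)$ to converge to it; otherwise I extract a subsequence of pairwise distinct points of $X_0$, for which $\max\{r(z_{n_k}),r(z_{n_j})\}=d(z_{n_k},z_{n_j})\to0$, so $r(z_{n_k})=d(c,z_{n_k})\to0$ and $z_{n_k}\to c$, whence $(z_n)\to c$. Next I would show that $(X,d)$ is totally bounded if and only if $A_\varepsilon:=\{x\in X_0:r(x)\ge\varepsilon\}$ is finite for each $\varepsilon>0$: distinct points of $A_\varepsilon$ are pairwise at distance $\ge\varepsilon$, so an infinite $A_\varepsilon$ rules out total boundedness, while if every $A_\varepsilon$ is finite then $X=B(c,\varepsilon)\cup A_\varepsilon$ exhibits a finite $\varepsilon$-net. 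In particular, under total boundedness $X_0=\bigcup_{n\in\mathbb N}A_{1/n}$ is countable.

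For (i)$\Rightarrow$(ii), compactness makes every $A_\varepsilon$ finite, so I can enumerate $X_0=\{x_1,x_2,\dots\}$ in non-increasing order of $r$ by repeatedly removing a point of maximal $r$-value (the maximum is attained since the small $A_\varepsilon$ are finite and nonempty); ties among equal values are broken arbitrarily. Every point appears (a point of value $a$ lies among the first $\#A_a$ terms) and $r(x_n)\to0$ (only finitely many terms exceed any $\varepsilon$). Taking the ray $R$ with edges $\{x_n,x_{n+1}\}$ and labeling $l:=r$, monotonicity makes the largest label on the path from $x_i$ to $x_j$ equal to $l(x_{\min(i,j)})=\max\{r(x_i),r(x_j)\}=d_0(x_i,x_j)$, so $R(l)$ generates $(X_0,d_0)$, with $l(x_n)\ge l(x_{n+1})>0$ and $l(x_n)\to0$. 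For (ii)$\Rightarrow$(i) the path description gives $d_0(x_i,x_j)=l(x_{\min(i,j)})$; fixing $\varepsilon>0$ and choosing $N$ with $l(x_n)<\varepsilon$ for $n>N$, the center inequality yields $d(c,x_n)\le d(x_{n+1},x_n)=l(x_n)<\varepsilon$, so $X=B(c,\varepsilon)\cup\{x_1,\dots,x_N\}$ is totally bounded; being also complete, $(X,d)$ is compact.

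I expect the conceptual crux to be the completeness lemma: recognising that the center $c$ absorbs every Cauchy sequence that is not eventually constant is exactly what collapses compactness to total boundedness and makes the $r$-values the right bookkeeping device. The only other delicate point is the enumeration in (i)$\Rightarrow$(ii), where repeated labels must be threaded into a single non-increasing sequence that still exhausts $X_0$; the finiteness of each $A_\varepsilon$ is what guarantees this is possible.
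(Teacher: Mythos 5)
This statement is not proved anywhere in the paper: it is posed as Conjecture \ref{tghjnk} and left open, so there is no authors' argument to compare yours against. Judged on its own merits, your proof is correct and complete, and it would settle the conjecture affirmatively. The engine of the argument is the identity
\begin{equation*}
d_0(x,y)=\max\{d(c,x),\,d(c,y)\},\qquad x\neq y,\ x,y\in X_0,
\end{equation*}
which you correctly derive from the center inequality \eqref{se} obtained inside the proof of Theorem \ref{xz} together with the strong triangle inequality. From it you get three facts, each verified soundly: every infinite $\mathbf{US}$-space is complete, because a Cauchy sequence that is not eventually constant is forced to converge to $c$, so compactness reduces to total boundedness; total boundedness is equivalent to finiteness of every set $A_\varepsilon=\{x\in X_0: d(c,x)\ge\varepsilon\}$, since distinct points of $A_\varepsilon$ are at mutual distance at least $\varepsilon$ while $\{c\}\cup A_\varepsilon$ is an $\varepsilon$-net; and finiteness of the $A_\varepsilon$ makes the greedy non-increasing enumeration of $X_0$ well defined and exhaustive (your counting argument that a point with $d(c,x)=a$ appears among the first $\mathrm{card}(A_a)$ terms is right), after which monotonicity gives $d_{l}(x_i,x_j)=l(x_{\min(i,j)})=d_0(x_i,x_j)$, so the ray generates $(X_0,d_0)$; conversely, the ray data yield the finite $\varepsilon$-net $\{c,x_1,\dots,x_N\}$ via $d(c,x_n)\le d(x_{n+1},x_n)=l(x_n)$, and completeness finishes (ii)$\Rightarrow$(i). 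Two small points should be made explicit in a polished write-up, though neither affects correctness: in the completeness lemma the extracted pairwise-distinct subsequence may contain $c$ (at most once), and that term must be discarded before applying the displayed identity; and the countable-infinite character of $X_0$ under (i), which you note via $X_0=\bigcup_{n}A_{1/n}$, is exactly what licenses an enumeration indexed by $\mathbb{N}$. As a by-product, your completeness lemma also explains the caption of Figure~4: an infinite $\mathbf{US}$-space is always complete, which is why the star-graph space there can serve as the completion of the ray space.
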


\section*{Declarations}

\textbf{Declaration of competing interest}

 The authors declare no conflict of interest.

\medskip

\textbf{Data availability}

  All necessary data are included into the paper.

\medskip

\textbf{Acknowledgement}

The first author was supported by grant $359772$ of the Academy of Finland.

\medskip


\textbf{CRediT (Contributor Roles Taxonomy) authorship contribution statement}

Author1: Conceptualization, Methodology, Original Draft Preparation.

Author2: Visualization, Review-Editing, Software, Validation.

\bigskip

CONTACT INFORMATION

\medskip
Oleksiy Dovgoshey\\
Institute of Applied Mathematics and Mechanics of NASU, Slovyansk, Ukraine,\\
Department of Mathematics and Statistics, University of Turku, Turku, Finland \\
oleksiy.dovgoshey@gmail.com, oleksiy.dovgoshey@utu.fi

\medskip
Olga Rovenska\\
Department of Mathematics and Modelling, Donbas State Engineering Academy, Kramatorsk, Ukraine\\
rovenskaya.olga.math@gmail.com
\end{document}